\let\oldmarginpar\marginpar
\renewcommand{\marginpar}[2][rectangle,draw,text width= 2cm,rounded corners]{
    \oldmarginpar{
    \scriptsize \tikz \node at (0,0) [#1]{#2};}
    }
\def\mvint_#1{\mathchoice
          {\mathop{\vrule width 6pt height 3 pt depth -2.5pt
                  \kern -9pt \intop}\limits_{\kern -3pt #1}}%
          {\mathop{\vrule width 5pt height 3 pt depth -2.6pt
                  \kern -6pt \intop}\nolimits_{#1}}%
          {\mathop{\vrule width 5pt height 3 pt depth -2.6pt
                  \kern -6pt \intop}\nolimits_{#1}}%
          {\mathop{\vrule width 5pt height 3 pt depth -2.6pt
                  \kern -6pt \intop}\nolimits_{#1}}}
\newcommand{\INT}{\operatorname{int}}
\newcommand{\epi}{\operatorname{epi}}
\newcommand{\bbbr}{\mathbb R}
\newcommand{\N}{\mathbb N}
\newcommand{\R}{\mathbb R}
\newcommand{\eps}{\varepsilon}
\def\diam{\operatorname{diam}}
\newtheorem{theorem}{Theorem}[section]
\newtheorem*{theorem*}{Theorem}
\newtheorem{lemma}[theorem]{Lemma}
\newtheorem{proposition}[theorem]{Proposition}
\theoremstyle{definition}
\newtheorem{remark}[theorem]{Remark}
\newtheorem*{remark*}{Remark}
\DeclareMathOperator{\Lip}{Lip}
\DeclareMathOperator{\inte}{int}
\DeclareMathOperator{\dist}{dist}
\newcommand*{\loc}{{\mathrm{loc}}}
\renewcommand{\tocsection}[3]{%
  \indentlabel{\@ifnotempty{#2}{\bfseries\ignorespaces#1 #2\quad}}\bfseries#3}
\renewcommand{\tocsubsection}[3]{%
  \indentlabel{\@ifnotempty{#2}{\ignorespaces#1 #2\quad}}#3}
\newcommand\@dotsep{4.5}
\def\@tocline#1#2#3#4#5#6#7{\relax
  \ifnum #1>\c@tocdepth 
  \else
    \par \addpenalty\@secpenalty\addvspace{#2}%
    \begingroup \hyphenpenalty\@M
    \@ifempty{#4}{%
      \@tempdima\csname r@tocindent\number#1\endcsname\relax
    }{%
      \@tempdima#4\relax
    }%
    \parindent\z@ \leftskip#3\relax \advance\leftskip\@tempdima\relax
    \rightskip\@pnumwidth plus1em \parfillskip-\@pnumwidth
    #5\leavevmode\hskip-\@tempdima{#6}\nobreak
    \leaders\hbox{$\m@th\mkern \@dotsep mu\hbox{.}\mkern \@dotsep mu$}\hfill
    \nobreak
    \hbox to\@pnumwidth{\@tocpagenum{\ifnum#1=1\bfseries\fi#7}}\par
    \nobreak
    \endgroup
  \fi}
\renewcommand\csname r@tocindent0\endcsname{0pt}
\def\l@subsection{\@tocline{2}{-5pt}{2.5pc}{5pc}{}}
\title[Approximation of convex bodies]{$\mathbf{C^2}$-Lusin approximation of strongly convex bodies}
\author[Azagra]{Daniel Azagra}
\address{Daniel Azagra, \newline \indent Department of Mathematical Analysis and Applied Mathematics, \newline \indent Universidad Complutense de Madrid,  28040 Madrid, Spain}
\email{azagra@mat.ucm.es}
\thanks{D.A. was supported by grant PID2022-138758NB-I00}
\author[Drake]{Marjorie Drake}
\address{Marjorie Drake, \newline \indent Department of Mathematics, Massachusetts Institute of Technology \newline \indent 
77 Massachusetts Ave., Cambridge, MA 02139}
\email{mkdrake@mit.edu}
\thanks{M.D. was supported by NSF Award No. 2103209}
\author[Haj\l{}asz]{Piotr Haj\l{}asz}
\address{Piotr Haj\l{}asz,\newline \indent Department of Mathematics, University of Pittsburgh, \newline \indent 301 Thackeray Hall, Pittsburgh,
Pennsylvania 15260}
\email{hajlasz@pitt.edu}
\thanks{P.H. was supported by NSF grant  DMS-2055171}
\keywords{}
\subjclass[2020]{Primary; Secondary}
\begin{document}

\begin{abstract}
We prove that, if $W \subset \R^n$ is a locally strongly convex body (not necessarily compact), then for any open set $V \supset \partial W$ and $\eps>0$,
there exists a $C^2$ locally strongly convex body $W_{\eps, V}$ such that $\mathcal{H}^{n-1}(\partial W_{\eps, V}\triangle\,\partial W)<\varepsilon$ and $\partial W_{\eps, V}\subset V$. Moreover, if $W$ is strongly convex, then $W_{\eps, V}$ is strongly convex as well.
\end{abstract}

\maketitle

\section{Introduction}
\label{intro}

The aim of this note is to prove the following result.
\begin{theorem}\label{geometric corollary}
Let $W \subset \R^n$ be a locally strongly convex body (not necessarily compact), $\varepsilon>0$,  and the set $V \supset \partial W$ be open. There exists a $C^2$ locally strongly convex body $W_{\eps, V}$ such that $\mathcal{H}^{n-1}(\partial W_{\eps, V}\triangle\,\partial W)<\varepsilon$ and $\partial W_{\eps, V}\subset V$. Moreover, if $W$ is strongly convex, then $W_{\eps, V}$ is strongly convex as well.
\end{theorem}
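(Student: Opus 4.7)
The plan is to reduce Theorem~\ref{geometric corollary} to a $C^2$ Lusin-type approximation theorem for (strongly) convex functions, which should be the main analytic result of the paper: given a (strongly) convex $u : \Omega \to \R$ on an open subset $\Omega \subset \R^{n-1}$ and any $\delta > 0$, there exists a $C^2$ (strongly) convex $\tilde u$ on $\Omega$ with $\mathcal{L}^{n-1}(\{u \neq \tilde u\}) < \delta$.

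The first step is to localize. Local strong convexity of $W$ at each $x_0 \in \partial W$ provides, after a rotation aligning the outward normal at $x_0$ with $-e_n$, a cylindrical neighborhood $U_{x_0} = B'(x_0', r) \times (a,b) \subset V$ in which $\partial W$ is the graph of a strongly convex function $f_{x_0} : B'(x_0', r) \to (a,b)$ and $W \cap U_{x_0}$ is its epigraph. Since $\partial W$ is a Lipschitz manifold with locally finite $\mathcal{H}^{n-1}$-measure, it admits a countable locally finite cover by such cylinders $\{U_i\}$. Applying the Lusin theorem to each $f_i$ with tolerance $\eps_i$ (with $\sum_i \eps_i < \eps$) yields a $C^2$ strongly convex $\tilde f_i$ coinciding with $f_i$ off a set $E_i \subset B'_i$ of measure less than $\eps_i$; uniform Lipschitz bounds on the $f_i$ and $\tilde f_i$ control $\mathcal{H}^{n-1}(\text{graph}(\tilde f_i|_{E_i}) \cup \text{graph}(f_i|_{E_i})) \lesssim \eps_i$, whose sum bounds the symmetric difference. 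The containment $\text{graph}(\tilde f_i) \subset V$ is ensured by requiring $\|\tilde f_i - f_i\|_\infty$ to be small, which can be incorporated into the Lusin statement or arranged by shrinking the cylinders.

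The main obstacle is to glue the local $C^2$ strongly convex pieces into a single globally $C^2$ locally strongly convex body while preserving convexity. A direct partition-of-unity averaging of graph functions does not respect the varying graphing directions across charts and in general destroys convexity of the body. The cleanest remedy is to apply the Lusin theorem to a single globally defined convex function with $\{F=0\} = \partial W$: the signed distance $F(x) = \dist(x, W) - \dist(x, \R^n \setminus W)$ is convex on $\R^n$ whenever $W$ is convex and satisfies $|\nabla F| = 1$ a.e.\ in a tubular neighborhood of $\partial W$. Setting $W_{\eps, V} := \{\tilde F \leq 0\}$, the boundary is $C^2$ on the large set $\{\tilde F = F\}$ (where $|\nabla \tilde F| = 1 \neq 0$), the symmetric-difference bound follows from a coarea estimate using $|\nabla F|=1$, and $\partial W_{\eps, V} \subset V$ is obtained by confining the exceptional set of $\tilde F$ to a narrow tubular neighborhood of $\partial W$ inside $V$. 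One genuine subtlety is that the signed distance is not strongly convex even when $W$ is (e.g.\ $W = B(0,1)$ gives $F(x) = |x|-1$, whose Hessian is degenerate in the radial direction), so to preserve strong convexity of the body one must instead apply the Lusin theorem to a strongly convex defining function—such as $\rho_W^2/2$ built from the Minkowski gauge when $0 \in \inte W$ and $W$ is bounded, with an exhaustion argument for the non-compact case—or else perform the Lusin approximation chart by chart and glue by using the freedom to place each exceptional set $E_i$ inside a prescribed open subset of $B'_i$ disjoint from chart overlaps.
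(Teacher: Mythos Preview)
Your proposal correctly identifies the analytic input (Theorem~\ref{ADHthm}) and, for the compact case, the right global defining function: the squared Minkowski gauge $\mu_W^2$, which is indeed strongly convex by Proposition~\ref{characterizations of strong convexity of compact bodies}. This is essentially what the paper does in the bounded case. However, two steps in your sketch do not go through.

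For the unbounded case you propose ``an exhaustion argument'', but it is unclear how to glue compact convex approximants into a single convex body, and this is precisely the obstacle you already flagged. The point you are missing is that an \emph{unbounded} locally strongly convex body in $\R^n$ is, after a rotation, \emph{globally} the epigraph of a single locally strongly convex function $f:U\to\R$ on an open convex $U\subset\R^{n-1}$ (see \cite{AS}); hence Theorem~\ref{ADHthm} applies once to $f$ and part~(c) gives the $\mathcal{H}^{n-1}$ bound directly, with no gluing or exhaustion. Your fallback of placing each exceptional set $E_i$ in a prescribed open subset disjoint from chart overlaps also fails: the Lusin theorem gives no control over \emph{where} $\{u\neq\tilde u\}$ sits, and even if it did, forcing $\tilde f_i=f_i$ on overlaps would make the glued boundary coincide with $\partial W$ there, which is not $C^2$.

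In the compact case your claim that ``the symmetric-difference bound follows from a coarea estimate'' covers only half the work. Coarea lets you choose a level $t_0$ with $\mathcal{H}^{n-1}\big(\{g\neq\mu^2\}\cap g^{-1}(t_0)\big)$ small, which controls $\mathcal{H}^{n-1}(\partial W_\varepsilon\setminus\partial W)$; your choice $W_{\varepsilon,V}=\{\tilde F\leq 0\}$ fixes the level at $0$ and may miss this entirely. More importantly, bounding $\mathcal{H}^{n-1}(\partial W\setminus\partial W_\varepsilon)$ requires a Lipschitz surjection from $\partial W_\varepsilon$ onto $\partial W$. The metric projection onto the convex set $W_\varepsilon$ goes the wrong way, and projection onto $\partial W$ is not Lipschitz for a general convex body. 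The paper resolves this by first passing to a $C^{1,1}$ strongly convex intermediate body (Lemma~\ref{corollary for convex bodies}), so that Lemma~\ref{metric projection is 2 Lip on a neighborhood of a C11 boundary} furnishes a $2$-Lipschitz projection onto $\partial W$ on a tubular neighborhood; only then is the $\mu^2$--coarea argument run.
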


Here, $\mathcal{H}^{n-1}$ denotes the $(n-1)$-dimensional Hausdorff measure, and $A\triangle B$ is the symmetric difference of the sets $A$ and $B$, that is, 
$A\triangle B :=(A\setminus B)\cup (B\setminus A)$.
Throughout this paper, we say that $W\subset \R^n$ is a \emph{convex body} if it is closed, convex, and has nonempty interior; if its boundary $\partial W$ can be represented locally (up to a suitable rotation) as the graph of a strongly convex function, then we say $W$ is a {\em locally strongly convex body}.
We say that $W$ is a {\em strongly convex body} if it is a compact locally strongly convex body. One can prove that $W$ is a strongly convex body if and only if it is the intersection of a family of closed balls of the same radius; see Proposition~\ref{characterizations of strong convexity of compact bodies} for this and other equivalent characterizations of strongly convex bodies.
Note that the epigraph of a strongly convex function is never a strongly convex body (though it is always a locally strongly convex body). However, if $u:\R^n\to\R$ is locally strongly convex and coercive,
then for every $t>\min_{x\in\R^n}\{u(x)\}$ the level set $u^{-1}((-\infty, t])$ is compact and locally strongly convex, hence also strongly convex; again,
see Proposition~\ref{characterizations of strong convexity of compact bodies}.

A function $u:U\to\R$ defined on an open convex set is {\em strongly convex} if there is $\eta>0$, such that $u(\cdot)-\frac{\eta}{2}|\cdot|^2$ is convex (in which case we say that $u$ is $\eta$-strongly convex). 
Note that, if $u$ is of class $C^2$, then this is equivalent to saying that, for all $x$, the minimum eigenvalue of $D^2 u(x)$ is greater than or equal to $\eta$. We say that $u$ is locally strongly convex if for every $x\in U$ there is $r_x>0$, such that the restriction of $u$ to the open ball $B(x,r_x)$ is strongly convex.

Theorem \ref{geometric corollary} was stated without proof in \cite{ADH} as a corollary to the main result of that paper, which we recall next. 

Let $\mathcal{G}_{u}$ represent the graph of a function $u:U \to \R$, where $U \subset \R^n$. 

\begin{theorem}[See \cite{ADH}]\label{ADHthm}
Let $U\subseteq\R^n$ be open and convex, and $u:U\to\bbbr$ be locally strongly convex. Then for every $\eps_o>0$ and for every continuous function $\varepsilon:U\to (0, 1]$ there is a locally strongly convex function 
$v\in C^2(U)$, such that
\begin{enumerate}
\item[(a)] $|\{x\in U:\, u(x)\neq v(x)\}|<\eps_o$;
\item[(b)] $|u(x)-v(x)|<\varepsilon(x)$ for all $x\in U$;
\item[(c)] $\mathcal{H}^{n}\left( \mathcal{G}_{u} \triangle \mathcal{G}_{v}\right)<\eps_o$.
\end{enumerate}
Also, if $u$ is $\eta$-strongly convex on $U$, then for every $\widetilde{\eta}\in (0, \eta)$ there exists such a function $v$ which is $\widetilde{\eta}$-strongly convex on $U$.
\end{theorem}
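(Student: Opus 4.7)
The plan is a Lusin-type approximation argument built on Alexandrov's theorem combined with a convexity-preserving $C^2$ extension. Since local strong convexity does not globalize, I would first exhaust $U$ by convex open subdomains $U_1\Subset U_2\Subset \cdots \Subset U$ on which $u$ is $\eta_k$-strongly convex and patch the construction across these pieces. On a single piece where $u$ is $\eta$-strongly convex, writing $u=\frac{\widetilde\eta}{2}|x|^{2}+w$ with $w$ convex lets us run the construction on $w$ and then add the quadratic back, giving the final $\widetilde\eta$-strong convexity assertion.

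By Alexandrov's theorem, the set $\mathcal{A}\subset U$ of points where $u$ admits a second-order Taylor expansion
\[
u(x+h)=u(x)+\nabla u(x)\cdot h+\tfrac{1}{2}h^{T}D^{2}u(x)h+o(|h|^{2})
\]
has full Lebesgue measure. An Egorov/Lusin-type selection produces a closed set $F\subset\mathcal{A}$ with $|U\setminus F|<\eps_o$, on which (i) $\nabla u$ is continuous, (ii) the $o(|h|^{2})$ remainder is quantitatively uniform, and (iii) the local cube sizes in a Whitney decomposition of $U\setminus F$ respect the prescribed modulus $\varepsilon(\cdot)$. A $C^2$ Whitney-type extension theorem for strongly convex $2$-jets, applied to $(u,\nabla u,D^{2}u)$ on $F$, then produces a $\widetilde\eta$-strongly convex $v\in C^2(U)$ with $v|_F=u|_F$. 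Item (a) is immediate since $\{u\neq v\}\subset U\setminus F$; item (b) follows from the pointwise control built into the extension through the $\varepsilon$-calibrated cube sizes; item (c) follows from the graph-area estimate
\[
\mathcal{H}^{n}(\mathcal{G}_{u}\triangle\mathcal{G}_{v})\leq \int_{U\setminus F}\sqrt{1+|\nabla u|^{2}}\,dx+\int_{U\setminus F}\sqrt{1+|\nabla v|^{2}}\,dx,
\]
combined with the requirement, imposed in the selection of $F$, that $U\setminus F$ be compactly contained in $U$ on a region where both gradients remain bounded.

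The main obstacle is the convexity-preserving $C^2$ extension: the classical Whitney extension yields a smooth function but typically destroys convexity. One must construct the extension by infimal convolution with strongly convex paraboloids fitted to the Alexandrov $2$-jet on $F$, or via a Glaeser-type refinement adapted to the positive definite cone for the Hessian data, so that the output is globally $\widetilde\eta$-strongly convex while interpolating the prescribed jet. A secondary difficulty is the simultaneous control of (b) and (c): pointwise closeness at rate $\varepsilon(x)$ demands Whitney cubes of diameter depending on $\varepsilon(x)$, while the graph-measure bound (c) demands that $U\setminus F$ stay compactly inside $U$ so that $|\nabla u|$ is uniformly bounded there. These are reconciled by a diagonal exhaustion: perform the construction on each $U_k$ with parameters $(\eps_o/2^{k},\varepsilon)$ and glue via the same convex-extension machinery on the overlaps, so that the resulting global $v$ inherits the strong convexity, the pointwise bound by $\varepsilon$, and the measure and area bounds from geometric series.
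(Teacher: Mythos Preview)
This theorem is not proved in the present paper: it is quoted verbatim from \cite{ADH} as that paper's main result and then used as a black box in the proof of Theorem~\ref{geometric corollary}. There is therefore no proof here against which to compare your proposal.

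That said, your outline names the right ingredients---Alexandrov's theorem, a Lusin/Egorov selection of a large closed set $F$ on which the $2$-jet is well behaved, and a Whitney-type extension off $F$---and you correctly locate the crux: a convexity-preserving $C^2$ Whitney extension is not available off the shelf, and building one is essentially the entire content of \cite{ADH}. Your suggestions (``infimal convolution with strongly convex paraboloids fitted to the Alexandrov $2$-jet'', ``Glaeser-type refinement adapted to the positive definite cone'') gesture in a plausible direction but are not concrete enough to constitute a proof; in particular, an infimum of paraboloids is at best $C^{1,1}$, not $C^2$, so a further smoothing step that still preserves convexity and the Lusin-type coincidence on $F$ is needed. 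The second difficulty you flag---gluing the pieces over the exhaustion $U_1\Subset U_2\Subset\cdots$---is genuinely serious: partitions of unity destroy convexity, while the convexity-preserving operations ($\max$, sup, inf-convolution) destroy $C^2$ regularity, so ``glue via the same convex-extension machinery on the overlaps'' hides a real gap. In short, your roadmap is reasonable at the level of a plan, but the two steps you yourself mark as obstacles are exactly where all the work lies, and your proposal does not yet resolve either of them.
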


Part (a) of this result says that we can approximate a locally strongly convex function by a $C^2$ locally strongly convex function in the Lusin sense. For motivation and background about this kind of approximation we refer the reader to the introductions of the papers \cite{AH, ADH}.

The rest of this note is organized as follows. In Section 2 we review some basic facts of convex analysis, provide multiple characterizations of strongly convex bodies, and detail useful technical estimates for the metric projection onto a compact convex body and onto the boundary of a $C^{1,1}$ {convex} body.
For more details and omitted proofs regarding convex functions and convex bodies we refer to \cite{HUL,Rockafellar,S}. While most of the results of Section 2 are well known, some of the equivalent conditions in Proposition~\ref{characterizations of strong convexity of compact bodies} are new, and Lemma~\ref{T1} is new. In Section 3, we complete the proof of Theorem \ref{geometric corollary}.

\section{Preliminaries for the proof of Theorem~\ref{geometric corollary}.}
\label{S5}

Every closed convex set $W\subset\R^n$ is the intersection of all closed half-spaces that contain $W$. In fact, for every $x\in \partial W$ there is a half-space $H_x$ such that $W\subset H_x$ and $x\in T_x\cap W$, where $T_x=\partial H_x$. The hyperplane $T_x$ is called a {\em hyperplane supporting} $W$ at $x$. For every $x\in \partial W$, there is a hyperplane supporting $W$ at $x$, but such a hyperplane is not necessarily unique. We define the {\em normal cone} to $W$ at $x$ as the set of all vectors perpendicular to some supporting hyperplane of $W$ at $x$ and pointing outside $W$: $$N_W(x):=\{\zeta\in\R^n : \langle \zeta, y-x\rangle\leq 0 \textrm{ for all } y\in W\}.$$
Because there must be a hyperplane supporting $W$ at $x$ for every $x \in \partial W$, we have $N_W(x) \neq\emptyset$ for every $x\in\partial W$.

It follows
that given an open convex set $U\subset\R^n$ and a convex function $f:U\to\R$, we have that for every $x\in U$ there is $v\in\R^n$ such that $f(y)\geq f(x)+\langle v,y-x\rangle$ for all $y\in U$. 
Indeed, on the right hand side we have an equation of a supporting hyperplane of the convex {\em epigraph} $\operatorname{epi}(f)=\{(x,t)\in U\times\R:\, x\in U,\ t\geq f(x)\}$.
The (nonempty) set of all such $v$ is denoted by $\partial f(x)$ and called the {\em subdifferential} of $f$ at $x$:
$$
\partial f (x): = \{v \in \R^{n}: f(y)\geq f(x)+\langle v,y-x\rangle \text{ for all }y\in U\}.
$$
For every $\xi\in \partial f(x)$,  we have that $n_\xi \in \R^{n+1}$ defined by
\begin{equation}\label{normal vector}
n_{\xi}:=\frac{1}{\sqrt{1+|\xi|^2}}\left(\xi, -1\right)
\end{equation}
is a unit normal vector to $\operatorname{epi}(f)$ at $(x, f(x))$ that points outside $\operatorname{epi}(f)$; that is,
$$
n_{\xi}\in N_{\epi(f)}(x,f(x))\cap \mathbb{S}^{n}.
$$

The convex function $f$ is differentiable at a point $x_0$ if and only if $\partial f(x_0)$ is a singleton, in which case we have $\partial f(x_0)=\{\nabla f(x_0)\}$, meaning that the tangent hyperplane to the graph of $f$ at $x_0$ is the unique hyperplane supporting the epigraph of $f$ at $(x_0,f(x_0))$. Convex functions are locally Lipschitz continuous, 
and it easily follows that if $f$ is Lipschitz continuous with Lipschitz constant $L$ in a neighborhood of $x$ and $\xi\in\partial f(x)$, then 
\begin{equation}
\label{eq1}
|\xi|\leq L.
\end{equation}
Also, it follows from Rademacher's theorem that convex functions
are differentiable almost everywhere, so $\partial f(x)=\{\nabla f(x)\}$ for almost every $x\in U$.

The following lemma is well known; for a proof see, for instance, \cite[Lemma 3.4]{ADH}.
\begin{lemma}
\label{T16}
Let $u:U\to\R$ be a convex function defined on an open convex set $U\subseteq\R^n$. Then $u$ is $\eta$-strongly convex if and only if 
\begin{equation}
\label{eq39}
u(y)\geq u(x)+\langle\xi,y-x\rangle +\frac{\eta}{2}|y-x|^2
\end{equation}
for all $x,y\in U$ and $\xi\in \partial u(x)$.
\end{lemma}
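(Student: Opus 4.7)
The plan is to work directly from the definition that $u$ is $\eta$-strongly convex if and only if the auxiliary function $g := u - \frac{\eta}{2}|\cdot|^2$ is convex on $U$. Both directions of the lemma then reduce to translating statements about subgradients of $g$ into statements about subgradients of $u$ and vice versa, via the elementary observation that $\partial u(x) = \partial g(x) + \eta x$, which holds because the quadratic $\frac{\eta}{2}|\cdot|^2$ is $C^\infty$ with gradient $\eta x$ (so the sum rule for subdifferentials applies without any subtle hypothesis).

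For the forward direction, I would take $\xi \in \partial u(x)$, set $v := \xi - \eta x$, note $v \in \partial g(x)$, and write the standard subgradient inequality $g(y) \geq g(x) + \langle v, y-x\rangle$. Expanding $g$ and completing the square using $|y|^2 - |x|^2 - 2\langle x, y-x\rangle = |y-x|^2$ immediately yields \eqref{eq39}.

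For the converse, assume \eqref{eq39} holds. I would fix $x_0, y_0 \in U$ and $\lambda \in [0,1]$, set $z := \lambda x_0 + (1-\lambda) y_0$, and choose any $\xi \in \partial u(z)$ (which is nonempty since $u$ is convex on the open set $U$). Applying \eqref{eq39} with the same $\xi$ first at the pair $(z, x_0)$ and then at $(z, y_0)$, and taking the convex combination with weights $\lambda$ and $1-\lambda$, the inner product terms cancel because $\lambda(x_0 - z) + (1-\lambda)(y_0 - z) = 0$, leaving
\[
\lambda u(x_0) + (1-\lambda) u(y_0) \geq u(z) + \frac{\eta}{2}\bigl(\lambda|x_0 - z|^2 + (1-\lambda)|y_0 - z|^2\bigr).
\]
The identity $\lambda|x_0 - z|^2 + (1-\lambda)|y_0 - z|^2 = \lambda(1-\lambda)|x_0 - y_0|^2 = \lambda|x_0|^2 + (1-\lambda)|y_0|^2 - |z|^2$ then rearranges the inequality into $\lambda g(x_0) + (1-\lambda) g(y_0) \geq g(z)$, which is the convexity of $g$.

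The argument is largely routine and I expect no serious obstacle; the only point that merits attention is the existence of a subgradient $\xi \in \partial u(z)$ at an arbitrary interior point $z$, which follows from the classical fact that a convex function on an open convex subset of $\R^n$ is subdifferentiable everywhere (recalled in the excerpt via the supporting-hyperplane construction for $\mathrm{epi}(u)$). Everything else is algebraic manipulation of the standard identity $\lambda|x_0|^2 + (1-\lambda)|y_0|^2 - |\lambda x_0 + (1-\lambda)y_0|^2 = \lambda(1-\lambda)|x_0 - y_0|^2$.
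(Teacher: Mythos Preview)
Your proof is correct and entirely standard. Note, however, that the paper does not actually prove this lemma: it simply cites \cite[Lemma 3.4]{ADH}. The only structural information the paper gives about that proof is in Remark~\ref{remark on characterization of strong convexity}, which observes that the $(\Leftarrow)$ direction in \cite{ADH} only requires \eqref{eq39} to hold for \emph{some} $\xi\in\partial u(x)$ at each $x$. Your argument for $(\Leftarrow)$ has exactly this feature: at the midpoint $z$ you pick a single $\xi\in\partial u(z)$ and never need the full subdifferential, so your proof is consistent with (and likely identical in spirit to) the one the paper is referencing.
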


\begin{remark}
\label{remark on characterization of strong convexity}
The proof of $(\Leftarrow)$ in \cite[Lemma 3.4]{ADH} also shows that if \eqref{eq39} holds for all $x, y\in U$ and {\em some } $\xi\in \partial u(x)$, then $u$ is $\eta$-strongly convex, and therefore, by the proof of $(\Rightarrow)$, \eqref{eq39} is also true for {\em all } $\xi\in\partial u(x)$.
\end{remark}

For any convex body $W\subset \R^n$ with $0\in\INT(W)$, the {\em Minkowski functional} (also known as {\em gauge}) of $W$ is the map $\mu_W:\R^n \to [0, \infty)$ defined by
$$
\mu_W(x):=\inf\Big\{\lambda>0 : \frac{1}{\lambda}x\in W\Big\}.
$$
The Minkowski functional is a positively homogeneous, subadditive convex function such that $\mu_W^{-1}([0,1])=W$ and $\mu_W^{-1}(1)=\partial W$. Because $0 \in \INT(W)$, there exists $\varepsilon>0$ such that $B(0, \varepsilon) \subset W$; hence, for all $x \in \R^n\setminus\{0\}$, $\frac{\varepsilon x}{2 |x|} \in W$. Thus, $$|\mu_W(x) - \mu_W(y)| \leq \max \{ \mu_W(x-y), \mu_W(y-x) \} \leq \frac{2}{\eps} |x-y|,$$ implying $\mu_W$ is Lipschitz.

As a consequence of the implicit function theorem and the positive homogeneity of $\mu$, we have $\partial W$ is a $1$-codimension submanifold of class $C^k(\R^n)$ if and only if $\mu_W$ is $C^k$ on $\R^n\setminus \mu_W^{-1}(0)$. Note that if $W$ is compact, then $\mu_W^{-1}(0)=\{ 0\}$. 

\begin{lemma}
\label{intersection of selection of supporting halfspaces}
Given a convex body $W\subset\R^n$, for any selection
$$\partial W\ni z\to \zeta(z)\in N_W(z)\cap \mathbb{S}^{n-1},$$  we have
\begin{equation}
\label{intersection of selection of supporting halfspaces equation}
W=\bigcap_{y\in\partial W}\{ x\in \R^n : \langle \zeta(y), x-y\rangle\leq 0\}.
\end{equation}
\end{lemma}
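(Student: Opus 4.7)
The forward inclusion $W\subseteq \bigcap_{y\in\partial W}\{x:\langle\zeta(y),x-y\rangle\leq 0\}$ is immediate: if $x\in W$, then by the very definition of the normal cone, $\langle\zeta(y),x-y\rangle\leq 0$ for every $y\in\partial W$. So the only work is the reverse inclusion.

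For the reverse inclusion, the plan is to fix an arbitrary $x\notin W$ and produce a \emph{single} boundary point $y^*$ (depending on $x$) for which $\langle\zeta(y^*),x-y^*\rangle>0$, no matter which unit outward normal $\zeta(y^*)\in N_W(y^*)\cap\mathbb{S}^{n-1}$ the selection picks. To find $y^*$, I would pick any $p\in\INT(W)$ (which exists because $W$ is a convex body) and consider the closed segment from $p$ to $x$. The set of parameters $t\in[0,1]$ with $p+t(x-p)\in W$ is a closed convex subset of $[0,1]$ containing $0$ but not $1$, hence of the form $[0,t^*]$ with $t^*\in(0,1)$; then $y^*:=p+t^*(x-p)\in\partial W$ and $x-y^*=\frac{1-t^*}{t^*}(y^*-p)$.

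The key computation is then to show that \emph{every} $\zeta\in N_W(y^*)\cap\mathbb{S}^{n-1}$ satisfies $\langle\zeta,y^*-p\rangle>0$. Since $p\in\INT(W)$, there exists $r>0$ with $B(p,r)\subset W$. Applying the normal cone inequality to $z:=p+r\zeta\in W$, I get $\langle\zeta,p-y^*\rangle+r=\langle\zeta,z-y^*\rangle\leq 0$, so $\langle\zeta,y^*-p\rangle\geq r>0$. Multiplying by $\frac{1-t^*}{t^*}>0$ gives $\langle\zeta,x-y^*\rangle>0$, which is exactly what we need for the selected value $\zeta=\zeta(y^*)$. Hence $x$ fails to lie in the half-space determined by $y^*$ and so lies outside the intersection.

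There is no real obstacle here: the argument is just a clean radial/separation argument exploiting the strictness of $p\in\INT(W)$. The only minor subtlety worth flagging in the write-up is why $t^*\in(0,1)$ (i.e., why $y^*\neq p$ and $y^*\neq x$), which follows because $p$ is interior while $x\notin W$, so the exit parameter must be strictly between the endpoints. No smoothness of $\partial W$ and no appeal to the full family of supporting hyperplanes is required, which is precisely the point of the lemma.
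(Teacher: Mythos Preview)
Your proof is correct. The forward inclusion is handled identically, and your reverse inclusion is a clean, self-contained separation argument: the computation $\langle\zeta,z-y^*\rangle\leq 0$ with $z=p+r\zeta$ is exactly what is needed to get strict positivity of $\langle\zeta,y^*-p\rangle$, and the algebra relating $x-y^*$ to $y^*-p$ is right.

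The paper's argument is different and shorter. Rather than fixing $x\notin W$ and building a separating boundary point, the paper sets $V:=\bigcap_{y\in\partial W}H_y^{-}$, observes that $W\subseteq V$ and both are convex bodies, and argues by contradiction: if $W\neq V$ then some $y\in\partial W$ lies in $\INT(V)\subseteq\INT(H_y^{-})$, which forces $\langle\zeta(y),y-y\rangle<0$, absurd. This is slick but leans on the (unstated) fact that $W\subsetneq V$ with both convex bodies forces $\partial W\cap\INT(V)\neq\emptyset$; justifying that fact is itself essentially a ray argument from an interior point, so your approach is really the same idea made explicit. Your version has the minor advantage of being fully self-contained and of producing, for each $x\notin W$, an explicit witness $y^*$; the paper's version has the advantage of brevity.
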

\begin{proof}
Since  $\zeta(y)$ is an outward unit normal vector to $W$ at $y$, the halfspace $H_y^{-}:=\{x: \langle \zeta(y), x-y\rangle\leq 0\}$ contains $W$ for every $y\in\partial W$, so we have that $W\subseteq V:=\bigcap_{y\in\partial W}H_y^{-}$. If $W\neq V$, since both $W$ and $V$ are convex bodies, and we already know that $W\subseteq V$, we must have $y\in \INT(V)$ for some $y\in\partial W$. But then $y\in\INT(H_y^{-})=\{x\in\R^n: \langle \zeta(y), x-y\rangle< 0\}$, which is absurd.
\end{proof}

Most of the equivalences provided by the following result are well known (see \cite{Vial} for instance), except, perhaps, for condition (d)
and the definition of a locally strongly convex body as a closed convex set whose boundary can be locally represented as the graph of a strongly convex function (which is not standard). We provide a complete proof for the reader's convenience.
\begin{proposition}
\label{characterizations of strong convexity of compact bodies}
Let $W\subset \R^n$ be a compact convex body satisfying $0\in\INT(W)$, and let $\mu:\R^n \to [0,\infty)$ denote the Minkowski functional of $W$. The following statements are equivalent:
\begin{enumerate}
\item[(a)] $W$ is strongly convex. 
\item[(b)] There is $R>0$, such that for every $x\in\partial W$, there is a closed ball $\overline{B}(y,R)$, such that $W\subset\overline{B}(y,R)$ and $x\in\partial\overline{B}(y,R)$.
\item[(c)] $W$ is the intersection of a family of closed balls of the same positive radius.
\item[(d)] $\mu^2$ is strongly convex.
\item[(e)] There exists a coercive, locally strongly convex function $g:\R^n\to\R$ such that $W= g^{-1}((-\infty, t])$ for some $t\in\R$ with $t>\min_{x\in\R^n}g(x)$.
\end{enumerate}
\end{proposition}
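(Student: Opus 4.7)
I would close the cycle (a) $\Rightarrow$ (b) $\Leftrightarrow$ (c) $\Rightarrow$ (d) $\Rightarrow$ (e) $\Rightarrow$ (a); after translation we may assume $0\in\INT(W)$ and fix $r>0$ with $\overline{B}(0,r)\subseteq W$, a bound that will be used repeatedly.

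\textbf{(b) $\Leftrightarrow$ (c).} Denote by $\overline{B}(z_x,R)$ the ball from (b) passing through $x\in\partial W$. For $\Rightarrow$, I apply Lemma~\ref{intersection of selection of supporting halfspaces} with the selection $\zeta(y):=(y-z_y)/R\in N_W(y)\cap\mathbb{S}^{n-1}$, which is an outer unit normal because $W\subseteq\overline{B}(z_y,R)$ and $y\in\partial\overline{B}(z_y,R)$; the identity $|p-z_y|^2=|p-y|^2+2\langle p-y,y-z_y\rangle+R^2$ then converts $\langle\zeta(y),p-y\rangle>0$ into $|p-z_y|>R$. For $\Leftarrow$, I close the family of centers under limits; the intersection is unchanged (by continuity), and the closure is automatically compact since $\overline{B}(0,r)\subseteq\overline{B}(z_\alpha,R)$ forces $|z_\alpha|\leq R-r$. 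For $x\in\partial W$, the continuous function $y\mapsto|x-y|$ attains its supremum on the closed family, and this supremum must equal $R$ (otherwise $x\in\INT W$), providing a ball through $x$.

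\textbf{(c) $\Rightarrow$ (d) $\Rightarrow$ (e) $\Rightarrow$ (a).} From $\mu_W=\sup_\alpha\mu_{B_\alpha}$ one gets $\mu_W^2=\sup_\alpha\mu_{B_\alpha}^2$, so (c) $\Rightarrow$ (d) reduces to showing that for each ball $B=\overline{B}(z,R)$ with $|z|\leq R-r$, the function $\mu_B^2$ is $c(R,r)$-strongly convex on $\R^n$ with constant independent of $z$. I would establish this by twice differentiating the implicit relation $|x-\mu z|^2=\mu^2 R^2$ and using $|z|\leq R-r$ to bound from below the denominators that appear in the resulting Hessian expression; taking the supremum preserves the modulus. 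For (d) $\Rightarrow$ (e), the choice $g:=\mu_W^2$ works: it is strongly convex, nonnegative with $g(0)=0$, coercive (since $\mu_W(x)\geq|x|/\diam(W)$), and $W=g^{-1}([0,1])$ with $\min g=0<1$. For (e) $\Rightarrow$ (a), coercivity gives $W$ compact and $t>\min g$ gives nonempty interior. At $x_0\in\partial W$, since $x_0$ is not a minimizer of the (globally) convex $g$, any $\xi\in\partial g(x_0)$ has $|\xi|>0$; after rotating so $\xi=|\xi|e_n$, strong convexity of $g$ combined with $g(x)\leq g(x_0)$ on $W$ yields $-(x-x_0)_n\geq(\eta/(2|\xi|))|x-x_0|^2$ locally. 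Writing $\partial W$ as a graph and applying the analogous estimate at each nearby boundary point with its own outward normal (depending continuously on the point, up to the choice of subgradient) then produces local strong convexity of the graph function presenting $\partial W$.

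\textbf{(a) $\Rightarrow$ (b) and main obstacle.} For each $x_0\in\partial W$ pick any $\zeta_{x_0}\in N_W(x_0)\cap\mathbb{S}^{n-1}$ and consider the candidate ball $\overline{B}(x_0-R\zeta_{x_0},R)$. A direct expansion gives $W\subseteq\overline{B}(x_0-R\zeta_{x_0},R)$ iff $|z-x_0|^2\leq -2R\langle z-x_0,\zeta_{x_0}\rangle$ for every $z\in W$, so it suffices to bound the ratio $\rho:=|z-x_0|^2/(2|\langle z-x_0,\zeta_{x_0}\rangle|)$ uniformly over $x_0,\zeta_{x_0},z$. Local strong convexity with a uniform modulus $\eta>0$ (extracted from compactness of $\partial W$ by a finite-covering argument) bounds $\rho\leq 1/\eta$ for $z$ close to $x_0$; for $z$ away from $x_0$, upper semicontinuity of $N_W$, strict convexity (implied by strong convexity), and compactness of $\partial W\times W$ give $|\langle z-x_0,\zeta_{x_0}\rangle|\geq c(\delta)>0$ whenever $|z-x_0|\geq\delta$, so $\rho\leq\diam(W)^2/(2c(\delta))$. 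Taking $R$ above this finite supremum completes (b). The main technical obstacle is the Hessian computation for $\mu_B^2$ in (c) $\Rightarrow$ (d): one must extract a strong-convexity modulus uniform in the position of the center $z\in\overline{B}(0,R-r)$. A secondary delicate point is the global compactness step in (a) $\Rightarrow$ (b), where the local strong-convexity bound on $\rho$ must be glued to the global strict-convexity lower bound via upper semicontinuity of the normal-cone map.
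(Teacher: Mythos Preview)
Your scheme matches the paper's closely: it also proves (a)$\Rightarrow$(b), (b)$\Rightarrow$(c), (c)$\Rightarrow$(d) via an explicit Hessian computation for $\mu_{B_a}^2$ (with a uniform modulus $\tfrac{1}{2R^2}$), and (d)$\Rightarrow$(e) trivially. The difference is in how the cycle closes: the paper proves (e)$\Rightarrow$(b) and then (b)$\Rightarrow$(a) separately, whereas you attempt (e)$\Rightarrow$(a) in one stroke. Note that your parabolic inequality $-(x-x_0)_n\geq\tfrac{\eta}{2|\xi|}|x-x_0|^2$ is \emph{exactly} containment in the ball $\overline{B}\bigl(x_0-\tfrac{|\xi|}{\eta}\zeta,\tfrac{|\xi|}{\eta}\bigr)$, so your (e)$\Rightarrow$(a) already contains (e)$\Rightarrow$(b); what remains is (b)$\Rightarrow$(a), and that is where your argument has a genuine gap.

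You have, at each $y\in\partial W$ near $x_0$, a ball containing $W$, but its axis is $\xi_y/|\xi_y|$ for some $\xi_y\in\partial g(y)$, which varies with $y$ and need not be the vertical direction of a \emph{fixed} graph representation of $\partial W$ near $x_0$. Your sentence ``applying the analogous estimate at each nearby boundary point with its own outward normal\dots then produces local strong convexity of the graph function'' skips the real work: one must convert the rotated-axis inequality at $y$ into a second-order lower bound for the fixed-coordinate graph function $\varphi$ at $y$, uniformly in $y$. The paper does this in its (b)$\Rightarrow$(a) proof by writing the bounding sphere $\partial B(y-Rv_y,R)$ itself as a graph $f_y$ in the fixed $x_0$-coordinates, checking by direct Hessian computation that $f_y$ is $\tfrac{1}{R}$-strongly convex, observing $f_y\leq\varphi$, $f_y(y)=\varphi(y)$, $\nabla f_y(y)\in\partial\varphi(y)$, and then invoking the subgradient characterization (Lemma~\ref{T16} and Remark~\ref{remark on characterization of strong convexity}) to conclude $\varphi$ is $\tfrac{1}{R}$-strongly convex. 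Without this device (or an equivalent one), strong convexity of $\varphi$ does not follow from your pointwise rotated-axis estimates; ``continuity of the outward normal up to the choice of subgradient'' is neither available a priori nor sufficient.

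Two smaller remarks. In (a)$\Rightarrow$(b), your claim ``$\rho\leq 1/\eta$ for $z$ close to $x_0$'' is not literally correct: in chart coordinates with $\xi\in\partial g_j(x)$ one only gets $\rho\leq C(L,\eta,r)$, and the paper's explicit choice of $R$ in \eqref{eq22} reflects this dependence. Your far-from-$x_0$ compactness argument via upper semicontinuity of $N_W$ and strict convexity is correct and is a softer substitute for the paper's explicit Case~2 estimate. Your direct (c)$\Rightarrow$(b), via closure and compactness of the set of centers, is a pleasant shortcut the paper does not take (the paper reaches (b) from (c) only through (d) and (e)).
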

\begin{proof}
$(a)\Rightarrow (b)$: 

Let $x\in\partial W$, because $W$ is locally strongly convex, $\partial W$ in a neighborhood of $x$ is the graph of a strongly convex function. By translation and rotation if necessary, we can find $r_x>0$ and a function $g_x:B^{n-1}(0,2r_x)\to\R$ that satisfies $g_x(0)=0$, $g_x$ is $\eta_x$-strongly convex, $L_x$-Lipschitz and 
$$
W_{x,2r_x}:=\{(t,g_x(t)):\, t\in B^{n-1}(0,2r_x)\}\subset\partial W,
$$
where the coordinates on the right hand side for $\R^{n-1}\times\R$ depend on $x$. 
In this coordinate system $x$ is represented as $(0,0)$.
More generally, for $s\in (0,2r_x]$ we define
$$
W_{x,s}:=\{(t,g_x(t)):\, t\in B^{n-1}(0,s)\}.
$$
Because $\partial W$ is compact, $\partial W\subset\bigcup_{x\in\partial W}W_{x,r_x}$ has a finite subcover, that is $\partial W\subset\bigcup_{j=1}^m W_{x_j,r_{x_j}}$.

Let $z \in \partial W$. Then there exists $j \in \{1, ..., m\}$ and $x \in B^{n-1}(0, r_j)$ such that $z=(x,g_j(x))$, where we let $g_j$ (resp.\ $r_j$) stand for $g_{x_j}$ (resp.\ $r_{x_j}$). We will show there exists $R>0$ such that for all $j \in \{1, ..., m\}$ and $x \in B^{n-1}(0,r_j)$, and any $\xi\in\partial g_j(x)$
we have 
\begin{equation}
\label{W is contained in suitable balls2}
W \subseteq \overline{B}\left( (x, g_j(x)) -R\, n_{\xi}, R\right),
\end{equation}
where $n_{\xi}$ is defined in \eqref{normal vector}, implying at once $W \subseteq \overline{B}\left( z -R\, n_{\xi}, R\right)$ and $z \in \partial B\left( z -R\, n_{\xi}, R\right)$, and thus (b) holds. Proceeding, our aim is to prove \eqref{W is contained in suitable balls2}.

For $j \in 1, ..., m$, let $\eta_j: = \eta_{x_j}$ and $L_j:=L_{x_j}$.
Let $L, \eta, r, r_0 >0$ be
\begin{align*}
    &L: = \max \{ L_1, ..., L_m\}, \\
    &\eta: = \min \{ \eta_1, ..., \eta_m \}, \\
    &r: = \max \{ r_1, ..., r_m\}, \text{ and}\\
    &r_0 : = \min \{ r_1, ..., r_m \}.
\end{align*}
Let $R>0$ be 
\begin{align}
\label{eq22}
R:= \sqrt{1+L^2} \max\left\{ \frac{1}{\eta}{\left( 1+\frac{\eta^2}{4} r^2 +L^2 +\eta Lr\right)},  \diam(W), \, \frac{\diam(W)^2}{\eta r_0^2} \right\}.
\end{align} 

To verify \eqref{W is contained in suitable balls2} holds with $R$ as in \eqref{eq22}, fix $j\in\{1, ..., m\}$, $x\in B^{n-1}(0, r_j)$, and $\xi\in\partial g_j(x)$. Let $(y,s)\in W$ in coordinates provided by $g_j$. Then we want to show
\begin{align}
    \left| (y, s)-(x, g_j(x)) +R n_\xi \right|^2\leq R^2, \label{aim}
\end{align}
where $n_{\xi}$ is defined in \eqref{normal vector}. We consider two cases.  

{\bf Case 1.} Suppose $|y-x|<r_j$. Since $W\cap\{ (y,s) : |y-x|<r_j\}\subset\{(y,s) : s\geq g_j(y)\}$ and $g_j$ is $\eta$-strongly convex, {Lemma~\ref{T16} yields}
\begin{equation}
\label{eq32}
g_j(x)+\langle \xi, y-x\rangle  +\frac{\eta}{2}|y-x|^2\leq g_j(y)\leq s. 
\end{equation}
Note that $s-g_j(x) \leq \diam(W)$ and in light of \eqref{eq1}, $|\xi|\leq L$, so our choice of $R$ in \eqref{eq22} yields
$$
\frac{R}{\sqrt{1+|\xi|^2}}+g_j(x)-s\geq \frac{R}{\sqrt{1+L^2}}-\diam(W)\geq 0.
$$
In combination with \eqref{eq32},
\begin{equation*}
\label{first part of case 12}
\left( \frac{R}{\sqrt{1+|\xi|^2}}+g_j(x)-s\right)^2\leq
\left( \frac{R}{\sqrt{1+|\xi|^2}}-\langle \xi, y-x\rangle  -\frac{\eta}{2}|y-x|^2\right)^2.
\end{equation*} 
Using \eqref{normal vector} we get
\begin{align}
    \big| (y, s)-(x, g_j(x)) +Rn_\xi \big|^2 &= \bigg| (y, s)-(x, g_j(x)) +\frac{R}{\sqrt{1+|\xi|^2}}\left(\xi, -1\right)\bigg|^2 \nonumber \\
    &= \left|y-x +\frac{R\xi}{\sqrt{1+|\xi|^2}}\right|^2 + \left|s - g_j(x)-\frac{R}{\sqrt{1+|\xi|^2}}\right|^2 \nonumber \\
    &\leq |y-x|^2+\frac{2R}{\sqrt{1+|\xi|^2}}\langle \xi, y-x\rangle +\frac{R^2 |\xi|^2}{1+|\xi|^2} \nonumber \\
    &\quad +\left( \frac{R}{\sqrt{1+|\xi|^2}} -\langle \xi, y-x\rangle-\frac{\eta}{2}|y-x|^2  \right)^2. \label{e1}
\end{align}
Expanding the square in \eqref{e1} and noting by the Cauchy-Schwarz inequality $\langle \xi, y-x \rangle^2 \leq |\xi|^2|y-x|^2$, we have:
\begin{align}
    \big| (y, s)-&(x, g_j(x))+Rn_\xi \big|^2 \nonumber \\
    &\leq |y-x|^2 +R^2\frac{{1+|\xi|^2}}{1+|\xi|^2} + \frac{\eta^2}{4} |y-x|^4 + \langle \xi, y-x\rangle^2 \nonumber \\
    &\quad +\eta\langle \xi, y-x\rangle |y-x|^2 -\frac{R\eta |y-x|^2}{\sqrt{1+|\xi|^2}} \nonumber \\
    &\leq R^2 + |y-x|^2\left(1+\frac{\eta^2}{4}|x-y|^2+ |\xi|^2  +\eta\langle \xi, y-x\rangle -\frac{R\eta}{\sqrt{1+|\xi|^2}}\right).  \label{e2}
\end{align}
To see the quantity in parentheses is bounded by $0$, apply the Cauchy-Schwarz inequality to the inner product, the bounds $|\xi| \leq L$ and $|x-y| \leq r_j \leq r$, and the lower bound on $R$ in \eqref{eq22} to estimate:
\begin{align*}
    1+\frac{\eta^2}{4}|x-y|^2+ |\xi|^2  +\eta\langle \xi, y-x\rangle \leq 1+ \frac{\eta^2}{4}r^2 +L^2 +\eta Lr \overset{\eqref{eq22}}{\leq} \frac{R\eta}{\sqrt{1+|\xi|^2}}.
\end{align*}
Subsituting this into \eqref{e2}, we see $\big| (y, s)-(x, g_j(x))+R  n_{\xi} \big|^2 \leq R^2$, as desired. 

{\bf Case 2.} Now suppose $|y-x|\geq r_j$. Using \eqref{normal vector} and $|n_{\xi}|= 1$, we estimate 
\begin{align}
     \big| (y, s)-(x, g_j(x))+R n_{\xi} \big|^2
     & =R^2 + \left|(y,s) - (x,g_j(x))\right|^2 +2R\frac{\langle y-x, \xi\rangle +g_j(x) -s}{\sqrt{1+|\xi|^2}} \nonumber\\
     & \leq R^2 + \diam(W)^2 +2R\frac{\langle y-x, \xi\rangle +g_j(x) -s}{\sqrt{1+|\xi|^2}}\nonumber\\
     &\leq R^2 +\frac{2R}{\sqrt{1+|\xi|^2}} \bigg( \frac{r_j^2 \eta}{2} + \langle y-x, \xi\rangle +g_j(x) -s \bigg), \label{e3}
\end{align}
where the last inequality follows because the choice of $R$ in \eqref{eq22} and $|\xi| \leq L$ ensure $\diam(W)^2\leq {r_j^2 \eta} \frac{R}{\sqrt{1+|\xi|^2}}$. {Since} $(y,s)\in W$ and $|y-x|\geq r_j$, we have
\begin{equation}
\label{e4}
s\geq g_j(x)+\langle \xi, y-x\rangle +\frac{\eta r_j^2}{2}.
\end{equation}
To see this, 
let $u=r_j(y-x)/|y-x|$, so $x+u\in B^{n-1}(0,2r_j)$ and convexity of $W$ along with Lemma~\ref{T16} imply
$$
\frac{s-g_j(x)}{|y-x|}\geq \frac{g_j(x+u)-g_j(x)}{r_j}\geq\frac{\langle\xi,u\rangle+\frac{\eta}{2}|u|^2}{r_j}=
\Big\langle\xi,\frac{y-x}{|y-x|}\Big\rangle+
\frac{\eta}{2}r_j
$$
and \eqref{e4} follows, because $|y-x|\geq r_j$.

Substituting \eqref{e4} into \eqref{e3}, we conclude
$
\big| (y, s)-(x, g_j(x))+R n_{\xi} \big|^2 \leq R^2.
$
The proof of \eqref{aim} and thus $(a)\Rightarrow (b)$ is complete.

$(b)\Rightarrow (c)$: By assumption, there is $R>0$ such that for every $y\in\partial W$ there exists $x_y\in\R^n$ so that $y\in \partial B(x_y, R)$ and 
\begin{equation}\label{W contained in intersection of balls}
W\subseteq \overline{B}(x_y, R).
\end{equation}
This implies that 
$$
\zeta(y):=\frac{1}{|y-x_y|}(y-x_y) \in N_W(y)\cap\mathbb{S}^{n-1}
$$
for every $y\in\partial W$, and from Lemma \ref{intersection of selection of supporting halfspaces} we deduce that
$$
\bigcap_{y\in\partial W}\overline{B}(x_y, R) =\bigcap_{y\in\partial W}\overline{B}(y-R\zeta(y), R)\subseteq \bigcap_{y\in \partial W}\{x: \langle \zeta(y), x-y\rangle\leq 0\}= W.
$$
In combination with \eqref{W contained in intersection of balls}, we have $\bigcap_{y\in\partial W}\overline{B}(x_y, R) = W$.

$(c)\Rightarrow (d)$: For some nonempty set $\mathcal{A}\subset\R^n$, we may write $W=\bigcap_{a\in \mathcal{A}}B_a$, where $B_a:=\overline{B}(a, R)$, $R>0$.  Since $0\in\INT(W)$ we have $|a|<R$ for all $a\in\mathcal{A}$, and in fact there exists $r>0$ such that
$$
\overline{B}(0, r)\subseteq W\subseteq B_a\subseteq \overline{B}(0, 2R),
$$ 
which implies that
\begin{equation}
|a|\leq R-r \textrm{ for all } a\in\mathcal{A},
\end{equation}
and also that
\begin{equation}\label{inclusion to inequality estimates for gauges}
\frac{1}{2R}|x|\leq\mu_a(x)\leq\frac{1}{r}|x| \textrm{ for all } x\in\R^n,
\end{equation}
where, for any $a\in\mathcal{A}$, 
$$
\mu_{a}(x):=\inf\Big\{\lambda>0 : \frac{x}{\lambda}\in B_a\Big\}
$$
is the Minkowski functional of the ball $B_a$ (with respect to the origin, not necessarily the center $a$ of $B_a$). Since $W=\bigcap_{a\in\mathcal{A}}B_a$,  we have for $x \in \R^n$,
$$\mu(x) = \inf \left\{ \lambda >0: \frac{x}{\lambda} \in\bigcap_{a \in A} B_a \right\} \leq  \sup_{a \in A} \inf \left\{ \lambda >0: \frac{x}{\lambda} \in B_a \right\} = \sup_{a \in A} \mu_a(x)
$$ 
and $W \subset B_a$ implies $\mu_a(x) \leq \mu(x)$ for all $a \in A$. Thus, for $x \in \R^n$, $\mu(x)=\sup_{a\in\mathcal{A}}\mu_a(x)$, and $\mu^2: \R^n \to [0, \infty)$ satisfies
$$
\mu^2(x)=\sup_{a\in\mathcal{A}}{\mu_a}^2(x).
$$
Because the supremum of a family of $\eta$-strongly convex functions is $\eta$-strongly convex, to prove $\mu^2$ is strongly convex, we need only show that there exists $\eta>0$ such that {$\mu_a^2$} is $\eta$-strongly convex for all $a\in\mathcal{A}$.

A straightforward calculation yields
$$
\mu_a(x)=\frac{-\langle x, a \rangle +\sqrt{ \langle x, a \rangle^2 +k_a|x|^2}}{k_a},
$$
where $$k_a:=R^2-|a|^2>0.$$ Differentiating $\mu_a(x)$, for every $x\in\R^n\setminus\{0\}$ we obtain
$$
\nabla\mu_a(x)=\frac{1}{k_a}\left[ -a +\frac{ \langle x, a\rangle a +k_a x}{(\langle x, a\rangle^2+k_a|x|^2)^{1/2}}\right] =\lambda_a(x) (x-\mu_a(x)a),
$$
where $\lambda_a: \R^n\setminus\{0\} \to \R$ is defined by
$$
\lambda_a(x) :=\frac{1}{\sqrt{ \langle x, a\rangle^2 +k_a |x|^2}}.
$$
Then we have
$$
D^2 \mu_a(x)=\nabla\lambda_a(x) \otimes (x-\mu_a(x) a) +\lambda_a(x) \left( I -a\otimes \nabla\mu_a(x)\right),
$$
where $I$ denotes the identity operator. Since $D^2\mu_a(x)$ is symmetric, we also have
$$
D^2\mu_a(x)= (x-\mu_a(x) a)\otimes \nabla\lambda_a(x) +\lambda_a(x) \left( I - \nabla\mu_a(x)\otimes a\right).
$$
Thus,
$$
D^2\mu_a^2(x)= 2\nabla\mu_a(x)\otimes \nabla \mu_a(x) +2\mu_a(x) D^{2}\mu_a(x).
$$

Recalling that $\nabla\mu_a(x)=\lambda_a(x) (x-\mu_a(x) a)$, the above expressions tell us that $$v_0:=\frac{\nabla\mu_a(x)}{|\nabla\mu_a(x)|}=\frac{x-\mu_a(x) a}{|x-\mu_a(x) a|}$$ is an eigenvector of both $D^{2}\mu_a(x)$ and $D^2\mu_a^2(x)$ (here we are using the easy facts that, for any vectors $b, c\in\R^n$, we have $b\otimes c(b)=\langle b,c\rangle b$; hence $b$ is an eigenvector of $b\otimes c$, and that any vector is an eigenvector of the identity). Since $\mu_a$ is convex we have $D^2\mu_a\geq 0$, so we estimate 
\begin{align*}
v_0^T D^2\mu_a^2(x)v_0 &\geq v_0^T \big( 2\nabla\mu_a(x)\otimes \nabla \mu_a(x) \big) v_0 \nonumber \\
&= 2\langle \nabla\mu_a(x), v_0\rangle^2= 2|\nabla\mu_a(x)|^2\geq 2\left(\frac{\mu_a(x)}{|x|}\right)^2\geq \frac{1}{2 R^2},
\end{align*}
where in the two last inequalities we used convexity of $\mu_a$ and \eqref{inclusion to inequality estimates for gauges}.
On the other hand, for every $v\in \mathbb{S}^{n-1}$ with $\langle v_0, v\rangle=0$ we also have $\langle x-\mu_a(x)a, v\rangle =0=\langle \nabla\mu_a(x), v\rangle$, hence
\begin{align*}
v^T D^2\mu_a^2(x)v &= v^T \big( 2 \mu_a(x) \lambda_a(x) I  \big) v \nonumber \\ 
&=\frac{2\mu_a(x)}{\sqrt{\langle x, a\rangle^2 +k_a|x|^2}}
\geq \frac{2\mu_a(x)}{\sqrt{|a|^2 |x|^2 +k_a|x|^2}}=
\frac{2\mu_a(x)}{R|x|}\geq \frac{1}{R^2}. 
\end{align*}
Let $\alpha_0$ be the eigenvalue associated to the eigenvector $v_0$, and let $\alpha_1, ... , \alpha_{n-1}$ be the rest of eigenvalues of $D^2\mu_a^2(x)$ (possibly repeated).  Because $D^2\mu_a^2(x)$ is symmetric and $v_0$ is an eigenvector of norm $1$, we can find eigenvectors $v_1, ..., v_{n-1}$ of $D^2\mu_a^2(x)$ with associated eigenvalues $\alpha_1, ... , \alpha_{n-1}$ so that $\{v_0, v_1, ..., v_{n-1}\}$ is an orthonormal basis of $\R^n$. The last two inequalities imply 
\begin{align*}
\alpha_j= v_j^T D^2\mu_a^2(x)v_j \geq\frac{1}{2R^2}   
\end{align*}
for all $j=0, 1, ..., n$, $x\neq 0$.
We deduce that the minimum eigenvalue of $D^2\mu_a^2(x)$ is greater than or equal to $\frac{1}{2R^2}$, and therefore $\mu_a^2$ is $\frac{1}{2R^2}$-strongly convex on any convex subset of $\R^n \setminus \{0\}$. Finally, if $x=0$, since $\mu_a$ is positive homogeneous,  $\mu_a^2$ is $2$-homogeneous, and we compute
$$v^T D^2\mu_a^2(x)v=\frac{d^2}{dt^2}\mu_a(tv)^2|_{t=0}=\frac{d^2}{dt^2}t^2\mu_a(v)^2|_{t=0}=2\mu_a(v)^2\geq\frac{|v|^2}{2R^2}=\frac{1}{2R^2}$$ for every $v\in\mathbb{S}^{n-1}$. We conclude $\mu_a^2$ is $\frac{1}{2R^2}$-strongly convex on all of $\R^n$, and thus $\mu^2(x) = \sup_{a \in A} \mu_a^2(x)$ is $\frac{1}{2R^2}$-strongly convex on $\R^n$.

$(d)\Rightarrow (e)$ is trivial (let $g=\mu_a^2$, $t=1$).

$(e)\Rightarrow (b)$: We assume the function $g:\R^n\to\R$ is locally strongly convex and coercive (and $W= g^{-1}((-\infty, t])$ for some $t>\min_{x\in\R^n}g(x)$), implying $g$ attains a minimum at a unique $x_0 \in \R^n$. We show for $c>g(x_0)$, there exists $R(c)>0$ such that the level set $K_c:=g^{-1}((-\infty, c]) $ satisfies (b), and, therefore, $W= g^{-1}((-\infty, t])$ satisfies (b) with $R = R(t)$. 

Notice the set $K_c$ is compact because $g$ is coercive, and therefore $g$ is $L$-Lipschitz for some $L>0$ on an open set $U\supset K_c$. Since $g$ is locally strongly convex, up to taking a smaller $U$ we may assume  that  $g$ is $\eta$-strongly convex on $U$.  Let $R:=\frac{L}{\eta}$; fix $y\in\partial K_c$ and $\zeta_y \in \partial g(y)$; then $g(y)=c$.  Because $g$ is coercive, $x_0$ must lie in the interior of $K_c$. Since $y\neq x_0$, we have $\zeta_y\neq 0$, and by the strong convexity of $g$ described in \eqref{eq39}, for $x\in K_c$,
$$
c=g(y)\geq g(x)\geq g(y)+\langle \zeta_y, x-y\rangle +\frac{\eta}{2}|x-y|^2,
$$
which implies
$$
\left|x-y +\frac{1}{\eta}\zeta_y\right|^2\leq \frac{|\zeta_y|^2}{\eta^2},
$$
showing that
$$
K_c \subseteq \overline{B}\left(y-\frac{1}{\eta}\zeta_y, \frac{|\zeta_y|}{\eta}\right)\subseteq \overline{B}\left(y-\frac{R}{|\zeta_y|}\zeta_y, R\right),
$$
completing the proof that $K_c$ satisfies (b), and therefore, $W$ satisfies (b) with $R = R(t) = \frac{L}{\eta}$.

$(b)\Rightarrow (a)$: Let $x\in\partial W$. Since $0\in\INT(W)$ there exists $r>0$ such that $\overline{B}(0,r)\subset\INT(W)$. Let $T_x$ be a hyperplane supporting $W$ at $x$ and $\zeta_x\in N_W(x)$ satisfy $\zeta_x$ is perpendicular to $T_x$. Then the ray $\{x-t\zeta_x: t>0\}$ intersects $\INT(W)$, so there are $t_0>0$ and $r>0$ such that $\overline{B}(x-t_0 \zeta_x, r)\subset\INT(W)$. For every $y\in T_x$ with $|y-x|<r$, the ray $\{y-t\zeta_x : t\geq 0\}$ passes through the ball $B(x-t_0 \zeta_x, r)$ and, therefore, intersects $\partial W$ at exactly two points; the first defines a function whose graph coincides with $\partial W$ in a neighborhood of $x$. Precisely, we define the function $g: T_x\cap B(x,r)\to [0, \infty)$ by $g(y):=\min\{t\geq 0: y+t\zeta_x\in\partial W\}$. This function is convex because its graph coincides with $\partial W$ on $U_x$ a neighborhood of $x$,  and $W$ is convex. 
We will show that $g$ is strongly convex, proving (a).

By assumption, there exists $R>0$ such that for every $y\in U_x\cap\partial W$ there is $v_y\in\mathbb{S}^{n-1}$ so that $W\subseteq \overline{B}(y-R v_y, R)$. In particular $v_y\in N_W(y)\cap N_{\overline{B}(y-R v_y, R)}(y)$, and $\partial B(y-Rv_y, R)\cap U_x$ is the graph of a $C^{\infty}$ convex function $f_y: T_x\cap B(x,r)\to \R$ such that 
\begin{equation}
\label{the gradient of fy at y is a subgradient of g at y}
f_y\leq g, \,\, f_y(y)=g(y), \,\, \nabla f_y(y)\in \partial g(y), \textrm{ and } 
v_y=(\xi_y, s_y),
\end{equation}
where
$$
\xi_y:=\frac{1}{\sqrt{1+|\nabla f_y(y)|^2}}\nabla f_y(y), \textrm{ and } s_y=\frac{-1}{\sqrt{1+|\nabla f_y(y)|^2}}.
$$
In the coordinates given by the hyperplane $T_x$ and its normal vector $-\zeta_x$, we have
$$
f_y(z)=g(y)-Rs_y-\sqrt{R^2-|z-y+R\xi_y|^2},
$$
and a straightforward calculation shows that
$$
D^2 f_y(z)= \frac{ \left(z-y+R\xi_y\right) \otimes \left(z-y+R\xi_y\right) +\left(R^2-|z-y+R\xi_y|^2\right) I}{\left(R^2 - |z-y+R\xi_y|^2\right)^{3/2}},
$$
where $I$ is the identity operator.
Clearly,
$$
w_0:=\frac{1}{|z-y+R\xi_y|}\left(z-y+R\xi_y\right)
$$
is an eigenvector of $D^{2}f_y(z)$, and we have
$$
w_0^T D^2 f_y(z)w_0=\frac{R^2}{\left(R^2 - |z-y+R\xi_y|^2\right)^{3/2}}\geq \frac{1}{R}.
$$
On the other hand, for all $w\in \mathbb{S}^{n-1}$ with $\langle w, w_0\rangle=0$ we have $\langle z-y+R\xi_y, w\rangle=0$, so
$$
w^T D^2f_y(z)w=\frac{R^2 - |z-y+R\xi_y|^2}{\left( R^2 - |z-y+R\xi_y|^2\right)^{3/2}}=\frac{1}{\left( R^2 - |z-y+R\xi_y|^2\right)^{1/2}}\geq \frac{1}{R}.
$$
Hence
$$
\min_{|w|=1}w^T D^2f_y(z)w \geq \frac{1}{R},
$$
and
$f_y$ is $\frac{1}{R}$-strongly convex. Now, by \eqref{the gradient of fy at y is a subgradient of g at y} and Lemma \ref{T16} it follows that
\begin{eqnarray}
& &  g(z)\geq f_y(z)\geq f_y(y)+\langle\nabla f_y(y), z-y\rangle+ \frac{1}{2R}|z-y|^2 \\
& &= g(y)+\langle\nabla f_y(y), z-y\rangle+ \frac{1}{2R}|z-y|^2,
\end{eqnarray}
so by Remark \ref{remark on characterization of strong convexity} we conclude that $g$ is $\frac{1}{R}$-strongly convex too. The proof is complete.
\end{proof}

A self-evident local variant of the proofs of $(a)\Rightarrow (b)$ and $(b)\Rightarrow (a)$ in Proposition~\ref{characterizations of strong convexity of compact bodies} shows the following:

\begin{lemma}\label{equivalent defn of locally strongly convex body}
For any (possibly unbounded) convex body $W\subset\R^n$ the following statements are equivalent:
\begin{enumerate}
\item $W$ is locally strongly convex (in the sense that $\partial W$ is locally, up to a rigid change of coordinates, the graph of a strongly convex function).
\item For every $x\in\partial W$ there exist an open neighborhood $U_x\ni x$ and a number $R_x>0$ such that, for all $y\in U_x\cap \partial W$ there is $v_y\in\mathbb{S}^{n-1}$ such that $W\cap U_x\subset \overline{B}(y-R_x v_y, R)$.
\end{enumerate}
Moreover, if (in appropriate coordinates) $W$ is the epigraph of a convex function $f$ and one of the conditions is satisfied, then $f$ is locally strongly  convex.
\end{lemma}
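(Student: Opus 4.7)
The plan is to observe that the proofs of $(a)\Rightarrow(b)$ and $(b)\Rightarrow(a)$ in Proposition~\ref{characterizations of strong convexity of compact bodies} are essentially local; the only appeals to global compactness of $W$ are a finite subcover of $\partial W$, the uniform bound $\diam(W)$, and Case~2 in the proof of $(a)\Rightarrow(b)$. All three become irrelevant once the arguments are confined to a single chart around a boundary point, so both implications follow by running the old arguments locally.

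For $(1)\Rightarrow(2)$, fix $x_0\in\partial W$ and work in a rigid-motion chart in which $\partial W$ near $x_0$ is the graph of an $\eta$-strongly convex, $L$-Lipschitz function $g:B^{n-1}(0,2r)\to\R$ with $g(0)=0$. Choose $U_{x_0}\ni x_0$ so small that $U_{x_0}\cap W$ projects into $B^{n-1}(0,r/2)$ (so that the horizontal distance between any two of its points is less than $r$), and let $R_{x_0}$ be the constant from (\ref{eq22}) with the local values of $\eta,L,r$. For any $y=(y',g(y'))\in U_{x_0}\cap\partial W$ and any $\xi\in\partial g(y')$, set $v_y:=n_\xi$ (cf.\ (\ref{normal vector})). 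Then Case~1 of the proof of $(a)\Rightarrow(b)$ applies verbatim to every $(z',s)\in W\cap U_{x_0}$ (Case~2 never arises, since $|z'-y'|<r$ by construction), yielding $W\cap U_{x_0}\subset\overline{B}(y-R_{x_0}v_y,R_{x_0})$.

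For $(2)\Rightarrow(1)$, fix $x_0\in\partial W$ with associated $U_{x_0},R_{x_0}$. Choose a supporting hyperplane $T_{x_0}$ of $W$ at $x_0$ with outward unit normal $\zeta_{x_0}$, and, using a small ball in $\INT(W)$, represent $\partial W$ near $x_0$ as the graph of a convex function $g:T_{x_0}\cap B(x_0,r)\to[0,\infty)$ (shrinking $r$ so the graph lies inside $U_{x_0}$), exactly as in the proof of $(b)\Rightarrow(a)$. For each $y$ in this graph, (2) gives $\overline{B}(y-R_{x_0}v_y,R_{x_0})\supset W\cap U_{x_0}$, whose lower hemisphere in the $T_{x_0}$-chart is the graph of a $C^\infty$ convex function $f_y\leq g$ with $f_y(y)=g(y)$, $\nabla f_y(y)\in\partial g(y)$, and Hessian bounded below by $\tfrac{1}{R_{x_0}}I$ (the original Hessian calculation is purely local). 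Lemma~\ref{T16} and Remark~\ref{remark on characterization of strong convexity} then imply that $g$ is $\tfrac{1}{R_{x_0}}$-strongly convex near $x_0$, giving (1).

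For the \emph{moreover}, suppose $W=\epi(f)$ in some fixed coordinate system; after deriving (2) via $(1)\Rightarrow(2)$ if necessary, re-run the $(2)\Rightarrow(1)$ argument but parameterize $\partial W$ by the original vertical direction, so the local graph function is $f$ itself. Since $W$ is an epigraph, the outward unit normal $v_{y_0}$ supplied by (2) at $y_0=(a_0,f(a_0))$ must have the form $(\xi,-1)/\sqrt{1+|\xi|^2}$, and the lower hemisphere of its ball is the graph of a $C^\infty$ $\tfrac{1}{R_{x_0}}$-strongly convex function $h$ with $h(a_0)=f(a_0)$ and $h\leq f$ locally (the local epigraph lies inside the ball). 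Hence $\nabla h(a_0)\in\partial f(a_0)$, and for $a$ near $a_0$,
$$
f(a)\geq h(a)\geq f(a_0)+\langle\nabla h(a_0),a-a_0\rangle+\tfrac{1}{2R_{x_0}}|a-a_0|^2,
$$
so Remark~\ref{remark on characterization of strong convexity} yields local strong convexity of $f$. The only real obstacle throughout is bookkeeping of base-point-dependent constants $\eta,L,r,R_x$ chosen consistently inside each chart; none of the estimates requires global uniformity.
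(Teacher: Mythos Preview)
Your proposal is correct and takes exactly the approach the paper indicates: the paper's entire ``proof'' is the single sentence that a ``self-evident local variant of the proofs of $(a)\Rightarrow (b)$ and $(b)\Rightarrow (a)$ in Proposition~\ref{characterizations of strong convexity of compact bodies}'' yields the lemma, and you have spelled out precisely that localization. The one small bookkeeping point worth making explicit is that in invoking the constant from~\eqref{eq22} you must also replace the occurrences of $\diam(W)$ (which enter Case~1 through the bound $s-g_j(x)\le\diam(W)$) by $\diam(U_{x_0})$, but this is covered by your closing remark about chart-dependent constants.
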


For any closed convex set $C\subset\R^n$ the metric projection $\pi_C: \R^n\to C$ (defined, for every $x\in\R^n$, as the unique point $\pi(x)\in C$ such that $\dist(x, \pi(x))=\dist(x, C)$\,) is $1$-Lipschitz; see \cite[(3.1.6)]{HUL} for a proof. 
Clearly, $\pi(x)\in\partial C$ if $x\notin\INT(C)$. When the boundary $\partial C$ is of class $C^{1,1}$, a bit more is true: the metric projection onto the (not necessarily convex) boundary $\partial C$ is also well defined and Lipschitz on an open neighborhood of $\partial C$.

For $W\subset \R^n$  satisfying $\partial W$ is of class $C^{1,1}$, let  $n_{\partial W}:\partial W\to \mathbb{S}^{n-1}$ be the outward unit normal vector to $\partial W$. Recall, 
$$
\Lip(n_{\partial W}):= \sup \left\{ \frac{|(n_{\partial W}(x) - n_{\partial W}(y)|}{|x-y|}: x, y \in \partial W, x \neq y\right\}.
$$
\begin{lemma}
\label{metric projection is 2 Lip on a neighborhood of a C11 boundary}
Let $W \subset \R^n$ be a closed convex set with nonempty interior such that $\partial W$ is of class $C^{1,1}$.  Then the metric projection $\pi: \Omega\to\partial W$ is well defined and $2$-Lipschitz, where
$$
\Omega: =\left\{x\in\R^n : d(x, \partial W)<  \frac{1}{2\Lip(n_{\partial W})}  \right\}\cup W^c.
$$ 
\end{lemma}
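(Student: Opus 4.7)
I plan to establish the lemma in three steps: well-definedness of $\pi$ on $\Omega$, an almost-everywhere pointwise derivative bound $\|D\pi\|_{\mathrm{op}}\leq 2$, and a global Lipschitz estimate combining these via a path argument. Let $L:=\Lip(n_{\partial W})$. For the first step, uniqueness of $\pi(x)$ for $x\in W^c$ follows from convexity of $W$, with $\pi(x)\in\partial W$. For $x\in\INT(W)\cap\Omega$, I would use the $C^{1,1}$ structure: the normal flow $F(y,t):=y+t\,n_{\partial W}(y)$ is injective on $\partial W\times(-1/L,1/L)$ because Lipschitzness of $n_{\partial W}$ and the triangle inequality give $|F(y_1,t)-F(y_2,t)|\geq(1-|t|L)|y_1-y_2|$. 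Since $\Omega\cap\INT(W)\subset\{d(x,\partial W)<1/(2L)\}$ lies strictly inside the injectivity radius, $\pi$ is the well-defined inverse of $F$ there.

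For the \emph{derivative bound}, since $n_{\partial W}$ is Lipschitz, Rademacher's theorem provides a Weingarten map $Dn_{\partial W}$ a.e.\ on $\partial W$, which is symmetric with eigenvalues $\kappa_i\in[0,L]$ (principal curvatures, nonnegative by convexity of $W$). Differentiating $\pi\circ F(y,\rho(x))=y$ yields, in principal directions $v_i$,
\[
D\pi(x)\cdot v_i=\tfrac{1}{1+\rho(x)\kappa_i}\,v_i,\qquad D\pi(x)\cdot n_{\partial W}(\pi(x))=0,
\]
where $\rho$ is the signed distance to $\partial W$ (positive outside, negative inside). For $x\in W^c$, $\rho\geq 0$ and $\kappa_i\geq 0$ give $\|D\pi(x)\|_{\mathrm{op}}\leq 1$; for $x\in\INT(W)\cap\Omega$, $|\rho(x)|<1/(2L)$ together with $\kappa_i\leq L$ yields $1+\rho(x)\kappa_i>1/2$, hence $\|D\pi(x)\|_{\mathrm{op}}<2$.

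For the \emph{global Lipschitz estimate}, the main obstacle is that $\Omega$ is in general non-convex: the segment $[x_1,x_2]$ may enter the inner core $W_\alpha:=\R^n\setminus\Omega=\{x\in W:\,d(x,\partial W)\geq 1/(2L)\}$. Since $W_\alpha$ is the inner parallel body of the convex set $W$, it is itself convex, so the segment $c(t):=(1-t)x_1+tx_2$ meets $W_\alpha$ in at most one subinterval $[a,b]\subset[0,1]$. When $[a,b]=\emptyset$, integrating the a.e.\ derivative bound along the segment directly yields $|\pi(x_1)-\pi(x_2)|\leq 2|x_1-x_2|$. Otherwise, set $p_i:=c(a),c(b)\in\partial W_\alpha$; the two portions $[x_1,p_1]$ and $[p_2,x_2]$ lie in $\Omega$ and contribute $2a|x_1-x_2|$ and $2(1-b)|x_1-x_2|$ via the derivative bound. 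For the middle piece, the map $\psi(y):=y-\tfrac{1}{2L}n_{\partial W}(y)$ bijects $\partial W$ onto $\partial W_\alpha$ with $|\psi(y_1)-\psi(y_2)|\geq\tfrac{1}{2}|y_1-y_2|$ (by the Step-1 estimate at $|t|=1/(2L)$), so $\psi^{-1}$ is $2$-Lipschitz; since $\pi(p_i)=\psi^{-1}(p_i)$, this gives $|\pi(p_1)-\pi(p_2)|\leq 2|p_1-p_2|=2(b-a)|x_1-x_2|$. Summing the three contributions yields the 2-Lipschitz bound. The delicate ingredient is precisely this three-part path decomposition, whose validity hinges on the convexity of $W_\alpha$.
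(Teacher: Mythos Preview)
The paper does not actually prove this lemma: it simply refers to \cite[Theorem~2.4]{AM} and the references therein. Your argument, by contrast, is a correct self-contained proof. The three-step scheme (tubular-neighborhood injectivity from the Lipschitz bound on $n_{\partial W}$, the pointwise eigenvalue computation $D\pi(x)v_i=(1+\rho(x)\kappa_i)^{-1}v_i$, and the segment decomposition exploiting convexity of the inner parallel body $W_\alpha=\Omega^c$) is sound, and the key observation that $[x_1,x_2]\cap W_\alpha$ is a single subinterval is exactly what makes the non-convexity of $\Omega$ harmless. Two small technical points are worth making explicit if you write this out in full: first, before invoking $D\pi=(DF)^{-1}$ in Step~2 you should record that $\pi$ is a priori locally Lipschitz (your Step-1 inequality together with the $1$-Lipschitzness of the signed distance already gives a crude constant, say $4$, on the inner collar; this is needed so that differentiability of $F$ at $(y,t)$ transfers to differentiability of $\pi$ at $F(y,t)$); second, passing from the a.e.\ bound $\|D\pi\|\le 2$ to the estimate along a \emph{fixed} segment in $\Omega$ uses the standard Fubini/continuity argument (shift the segment in a perpendicular direction, use that $\{D\pi\text{ fails to exist}\}$ is Lebesgue-null, and let the shift tend to $0$). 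You also implicitly extend $\pi$ continuously to $\partial W_\alpha$; this is legitimate because your Step-1 estimate at $|t|=\tfrac{1}{2L}$ still gives $|F(y_1,t)-F(y_2,t)|\ge\tfrac12|y_1-y_2|$, so uniqueness of the nearest point persists there.
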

\begin{proof}
See, for instance, \cite[Theorem 2.4]{AM}, or the references therein.
\end{proof}

We intend to apply the following lemma when $W,V \subset \R^n$ are compact but include the more general result:

\begin{lemma} \label{T1}
Let $W,V$ be (possibly not bounded) convex bodies such that $W\subset V \subsetneq\R^n$, and 
$\mathcal{H}^{n-1}\left(\partial V\setminus\partial W\right)<\infty$. 
Then the projection $\pi_W:\R^n\to W$ maps $\partial V$ \emph{onto} $\partial W$. 
\end{lemma}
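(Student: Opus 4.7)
The plan is to prove $\pi_W(\partial V)=\partial W$ in two directions. The inclusion $\pi_W(\partial V)\subseteq\partial W$ is routine: since $W\subseteq V$ are convex bodies of the same ambient dimension, $\INT(W)\subseteq\INT(V)$, so $\partial V\cap\INT(W)=\emptyset$ and hence $\partial V\cap W\subseteq\partial W$. Consequently, any $x\in\partial V\cap W$ satisfies $\pi_W(x)=x\in\partial W$, while for $x\in\partial V\setminus W$ the standard closest-point properties force $\pi_W(x)\in\partial W$.

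For surjectivity, fix $z\in\partial W$ and choose $\zeta\in N_W(z)\cap\mathbb{S}^{n-1}$, which exists because a supporting hyperplane exists at every boundary point. Every point of the ray $\{z+t\zeta:t\ge 0\}$ projects to $z$ under $\pi_W$, because $(z+t\zeta)-z=t\zeta\in N_W(z)$ characterizes the fiber $\pi_W^{-1}(z)$. If $z\in\partial V$ take $t=0$; otherwise $z\in\INT(V)$, and I aim to find $t^{*}>0$ with $z+t^{*}\zeta\in\partial V$. When $V$ is compact (the main intended case), boundedness forces $\{t\ge 0:z+t\zeta\in V\}$ to be a bounded interval, so $t^{*}:=\sup\{t\ge 0:z+t\zeta\in V\}$ is finite and $z+t^{*}\zeta\in\partial V\cap\pi_W^{-1}(z)$, as required.

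For unbounded $V$ I argue by contradiction, exploiting the finite-measure hypothesis. Assume $(z+N_W(z))\cap\partial V=\emptyset$; then by connectedness the entire cone $z+N_W(z)\subseteq\INT(V)$, and consequently each $\zeta\in N_W(z)\cap\mathbb{S}^{n-1}$ lies in $\operatorname{rec}(V)$. Fix such a $\zeta$, take $r>0$ with $B(z,r)\subseteq V$, and use the recession property to produce the infinite cylinder $B(z,r)+\R_+\zeta\subseteq V$. Writing $t_0:=\langle z,\zeta\rangle$, the support condition yields $W\subseteq\{\langle\cdot,\zeta\rangle\le t_0\}$, while for every $t>t_0$ the slice $V_t:=V\cap\{\langle\cdot,\zeta\rangle=t\}$ contains an $(n-1)$-dimensional ball of radius $r$ inside the hyperplane $H_t$. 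By monotonicity of surface area for convex bodies containing a fixed ball, the relative boundary $\partial_{H_t}V_t\subseteq\partial V\cap H_t$ has $\mathcal{H}^{n-2}$-measure at least $c_n r^{n-2}$ for a dimensional constant $c_n>0$; a coarea/Fubini estimate then gives $\mathcal{H}^{n-1}\bigl(\partial V\cap\{t_0<\langle\cdot,\zeta\rangle<t_0+T\}\bigr)\ge c_n r^{n-2}T$. Since $\partial W\subseteq\{\langle\cdot,\zeta\rangle\le t_0\}$, this slab portion of $\partial V$ is disjoint from $\partial W$, and letting $T\to\infty$ forces $\mathcal{H}^{n-1}(\partial V\setminus\partial W)=\infty$, contradicting the hypothesis. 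The main technical obstacle, absent in the compact case, is the degenerate possibility that some slice $V_t$ equals the entire hyperplane $H_t$: then convexity combined with $\zeta\in\operatorname{rec}(V)$ forces $V$ to contain (and in fact equal) a half-space, after which the infinite measure of $\partial V\setminus\partial W$ has to be extracted by a separate direct computation rather than via the slicing argument.
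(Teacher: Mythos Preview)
Your strategy matches the paper's core idea: fix $z\in\partial W$, shoot the normal ray, assume it misses $\partial V$, build a half-cylinder $B(z,r)+\R_{+}\zeta\subset V$, and produce infinite $\mathcal H^{n-1}$-measure of $\partial V$ above the supporting hyperplane $\{\langle\cdot,\zeta\rangle=t_0\}$ (which is disjoint from $\partial W$). The differences are organizational and technical. The paper splits according to whether $\partial V$ contains a line; when it does not, instead of your coarea slicing the paper observes that every line through a point of the ray and perpendicular to $\zeta$ must exit $V$, collects the exit points into a set $A\subset\partial V\setminus\partial W$, and projects $A$ radially onto the side of the half-cylinder by a $1$-Lipschitz map whose image covers at least half the cylinder, yielding $\mathcal H^{n-1}(A)=\infty$ directly. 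Your slicing route is a legitimate alternative and arguably more systematic; the paper's projection is slightly slicker because it avoids the coarea inequality.

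One caution on your argument: the appeal to ``monotonicity of surface area for convex bodies containing a fixed ball'' is standard for \emph{compact} convex bodies, but your slices $V_t$ can be unbounded even when $V_t\neq H_t$. The lower bound $\mathcal H^{n-2}(\partial_{H_t}V_t)\geq c_n r^{n-2}$ still holds, but you should justify it directly---e.g.\ choose a supporting $(n-2)$-plane $P$ of $V_t$ in $H_t$ and note that the orthogonal projection of $\partial_{H_t}V_t$ onto $P$ is $1$-Lipschitz and its image contains an $(n-2)$-ball of radius $r$. The degenerate case you flag ($V_t=H_t$, forcing $V$ to be a half-space) is precisely the paper's second case; the paper handles it by invoking the cylindrical decomposition $V=V_1\times E_0$ and citing \cite[Proposition~1.10]{AH} to conclude $\partial V=\partial W$, rather than attempting a direct measure computation.
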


\begin{proof} 
We consider two cases:

{\bf Case 1.} Suppose that $\partial V$ does not contain any lines.\footnote{This case was shown in an argument inside the proof of \cite[Theorem 1.6]{ACH}; we reproduce it here for completeness.} We will show for all $x\in \partial W$, there is $z\in \partial V$ such that $\pi_{W}(z)=x$. Let $\nu \in N_W(x)$. It suffices to show that the ray
$
R_x:=\{x+t\nu:\, t\geq 0\}
$
intersects $\partial V$ at some point $z$, implying $\pi_{W}(z)=x$. 

Suppose not; then $R_x \cap \partial V = \emptyset$, implying $R_x\subset\operatorname{int} V$.
Let $T_x \subset \R^n$ be a supporting hyperplane for $W$ at $x \in \partial W$:
$$
T_x:=\{x+v:\langle v,\nu\rangle=0\} 
$$
Then the open half space $H_x:=\{x+v:\langle v,\nu\rangle>0\}$ satisfies $H_x\cap \partial W=\emptyset$.
Because $x\in\operatorname{int}(V)$, there exists $\delta>0$ such that $B(x,2\delta) \cap T_x \subset \inte(V)$. Further, $(B(x,2\delta) \cap T_x)\cup R_x\subset \inte (V)$ and $V$ is convex, so $C_x\subset \inte (V)$, where
$
C_x:=\{p+t\nu:\, p\in \partial B(x,\delta) \cap T_x,\ t>0\}
$
is the side surface of a half-cylinder. 
Since $\partial V$ does not contain a line, the set $V$ does not contain a line.
Therefore, for $p\in R_x$, $v\in S^{n-1}$  satisfying $v$ is parallel to $T_x$, the line
$
L_{p,v}:=\{p+tv:\, t\in\R\}
$
must intersect $\partial V$. Let $A \subset H_x$ be
$$
A:= \bigcup_{p \in R_x, \langle v, \nu \rangle = 0}L_{p, v} \cap \partial V.
$$
Let $\pi$ be the radial projection of $A$ onto $C_x$ along lines $L_{p,v}$; $\pi$ is $1$-Lipschitz and hence $\mathcal{H}^{n-1}(A)\geq \mathcal{H}^{n-1}(\pi(A)) \geq \mathcal{H}^{n-1}(C_x)/2 = \infty$.
Since $H_x\cap \partial W=\emptyset$, we have $A\subset \partial V\setminus \partial W$, implying $\mathcal{H}^{n-1}(\partial V\setminus \partial W)=\infty$, a contradiction.

{\bf Case 2.} Suppose that $\partial V$ contain at least one line. Because $V \neq \R^n$, $V$ must have a cylindrical structure: up to isometry, $V=V_1\times E_0$, where $V_1$ is line-free, convex, and at least $1$-dimensional, and $E_0$ is a linear subspace. By an argument similar to the proof of \cite[Proposition 1.10]{AH}, we deduce that $\partial V=\partial W$ because $\mathcal{H}^{n-1}(\partial V\setminus\partial W)<\infty$. 
\end{proof}

Let us conclude our preliminaries with a restatement of \cite[Cor. 3.10]{ADH}.
\begin{lemma}
\label{T11}
If $u:\bbbr^n\to\bbbr$ is $\eta$-strongly convex, then for every $0<\widetilde{\eta}<\eta$ and every $\eps>0$, there is a 
$\widetilde{\eta}$-strongly convex function $v\in C^{1,1}_{\rm loc}(\bbbr^n)$, such that $v\geq u$ and $|\{x\in\bbbr^n:\, u(x)\neq v(x)\}|<\eps$.
\end{lemma}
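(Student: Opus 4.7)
My plan is to reduce the statement to a Lusin-type upper approximation theorem for ordinary convex functions, via a quadratic decomposition. Set $w(x) := u(x) - \tfrac{\widetilde\eta}{2}|x|^2$, so that $w - \tfrac{\eta-\widetilde\eta}{2}|\cdot|^2 = u - \tfrac{\eta}{2}|\cdot|^2$ is convex and hence $w$ is $(\eta-\widetilde\eta)$-strongly convex. If I can produce a $C^{1,1}_{\loc}$ convex function $\tilde w \ge w$ on $\R^n$ with $|\{\tilde w \ne w\}|<\eps$, then $v(x) := \tfrac{\widetilde\eta}{2}|x|^2 + \tilde w(x)$ meets every requirement: $v - \tfrac{\widetilde\eta}{2}|\cdot|^2 = \tilde w$ is convex so $v$ is $\widetilde\eta$-strongly convex; $v \ge u$ pointwise; $v \in C^{1,1}_{\loc}$; and $\{v \ne u\} = \{\tilde w \ne w\}$ has measure $<\eps$. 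The task thus becomes: upper-Lusin-approximate a convex function $w$ by a $C^{1,1}_{\loc}$ convex function.

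For the construction of $\tilde w$, my approach would combine Alexandrov's theorem with Lusin's theorem. By Alexandrov, the set $A$ of points where $w$ is twice differentiable has full measure. By Lusin (after a standard reduction by compact exhaustion), there is a closed $K \subset A$ with $|\R^n \setminus K| < \eps/2$ on which $\nabla w$ is continuous, and, up to discarding a further small piece, locally Lipschitz. Since $w$ is $(\eta-\widetilde\eta)$-strongly convex, Lemma~\ref{T16} supplies the global lower bound
\[
w(y) \ge w(x) + \langle \nabla w(x), y-x \rangle + \tfrac{\eta-\widetilde\eta}{2}|y-x|^2 \qquad (x \in K,\ y \in \R^n).
\]
I would then extend the pair $(w|_K, \nabla w|_K)$ to all of $\R^n$ by a Whitney-type construction, producing a $C^{1,1}_{\loc}$ convex function $\tilde w$ that agrees with $w$ on $K$; the key step is to extend $\nabla w|_K$ to a monotone Lipschitz vector field on $\R^n$ and integrate it to get a convex potential.

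The main obstacle is to arrange $\tilde w \ge w$ \emph{everywhere}, not just on $K$, while retaining convexity and $C^{1,1}_{\loc}$ regularity. A naive attempt via the pointwise maximum $\max(u, v_0)$ against a $C^2$ approximation $v_0$ furnished by Theorem~\ref{ADHthm} fails: the maximum has only Lipschitz creases along $\{u = v_0\}$, and any subsequent Moreau--Yosida-type smoothing destroys the one-sided bound $v \ge u$. The slack $\widetilde\eta < \eta$ is precisely what lets one absorb the extra curvature required to smoothly continue $\nabla w|_K$ past $K$, so that the integrated potential $\tilde w$ still lies above $w$ on the bad set $\R^n \setminus K$. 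Making this quantitative---the content of \cite[Cor.\ 3.10]{ADH}---is the technical heart of the argument, and I would follow that reference for the detailed estimates.
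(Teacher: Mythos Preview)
The paper does not prove this lemma; it merely restates \cite[Cor.~3.10]{ADH}. Your proposal ends in the same place: after the (correct, standard) quadratic shift $w=u-\tfrac{\widetilde\eta}{2}|\cdot|^2$, you identify the real difficulty---producing a convex $C^{1,1}_{\loc}$ function $\tilde w\ge w$ agreeing with $w$ outside a small set---and explicitly defer its resolution to that same reference. So your ``proof'' and the paper's coincide.

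One caution on the intermediate sketch you offer: the step ``extend $\nabla w|_K$ to a monotone Lipschitz vector field on $\R^n$ and integrate it to get a convex potential'' is not well-posed as written. A monotone map $\R^n\to\R^n$ need not be a gradient, so there is nothing to integrate; and even if one produced a convex $C^{1,1}_{\loc}$ extension $\tilde w$ agreeing with $w$ to first order on $K$, nothing in that description forces $\tilde w\ge w$ off $K$. You are right that the strict inequality $\widetilde\eta<\eta$ (equivalently, the residual $(\eta-\widetilde\eta)$-strong convexity of $w$) is precisely the leverage that makes the one-sided bound achievable, but converting that intuition into an actual construction is exactly the content of the cited corollary. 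Since you flag this as the technical heart and point to the source, this is a matter of exposition rather than a genuine gap.
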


\section{Proof of Theorem~\ref{geometric corollary}.}
\label{S6}

We are now fully equipped to proceed with the proof of Theorem~\ref{geometric corollary}. We begin with an auxiliary $C^{1,1}$ version of it.

\begin{lemma}
\label{corollary for convex bodies}
Let $W$ be a compact convex body in $\R^n$,  and $V$ be an open set containing $\partial W$. Then for every $\varepsilon>0$ there exists a compact convex body $W_{\varepsilon}\subseteq W$ of class $C^{1,1}$ such that
$\mathcal{H}^{n-1}\left(\partial W \triangle \partial W_{\varepsilon}\right)< \varepsilon$ and $\partial W_{\varepsilon}\subset V$. Moreover, if $W$ is a strongly convex body, then $W_{\eps}$ is a strongly convex body as well.
\end{lemma}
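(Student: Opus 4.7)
The plan is to take $W_\varepsilon$ to be the \emph{morphological opening} of $W$ with a small ball. Writing $\overline{B}$ for the closed unit ball and $\ominus$, $+$ for Minkowski erosion and addition, for $r>0$ below the inradius of $W$ set
\[
L_r := W \ominus r\overline{B} = \{x \in \bbbr^n : x + r\overline{B} \subseteq W\}, \qquad W_\varepsilon := L_r + r\overline{B}.
\]
Standard facts about opening give $W_\varepsilon \subseteq W$, $W_\varepsilon$ is a compact convex body with nonempty interior, and $W_\varepsilon \to W$ in Hausdorff distance as $r \to 0$. In particular, since $\partial W$ is compact and contained in the open set $V$, for $r$ sufficiently small one has $\partial W_\varepsilon \subset V$.

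The $C^{1,1}$ regularity of $\partial W_\varepsilon$ follows from the fact that $\partial W_\varepsilon = \{y : \dist(y, L_r) = r\}$ and the outward unit normal $y \mapsto (y - \pi_{L_r}(y))/r$ is Lipschitz (with constant $2/r$), since the projection $\pi_{L_r}$ onto the convex set $L_r$ is $1$-Lipschitz. When $W$ is strongly convex, write $W = \bigcap_\alpha \overline{B}(x_\alpha, R)$ using Proposition~\ref{characterizations of strong convexity of compact bodies}(c); a direct calculation yields $L_r = \bigcap_\alpha \overline{B}(x_\alpha, R - r)$ for $r < R$. Given $y \in \partial W_\varepsilon$, let $z := \pi_{L_r}(y) \in \partial L_r$. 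If $z$ lies on a single active sphere $\partial \overline{B}(x_\alpha, R - r)$, then $y$ lies on $\partial \overline{B}(x_\alpha, R)$, and $\overline{B}(x_\alpha, R) \supseteq W \supseteq W_\varepsilon$ supplies the outer $R$-ball required by characterization~(b); at edge points of $\partial L_r$ the corresponding portion of $\partial W_\varepsilon$ lies on a sphere of radius $r \leq R$ centered at $z$, whose principal curvatures $1/r \geq 1/R$ (together with $W_\varepsilon \subseteq W$) again yield an outer $R$-ball. Hence $W_\varepsilon$ satisfies characterization~(b) of Proposition~\ref{characterizations of strong convexity of compact bodies} and is strongly convex.

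The main obstacle is showing $\mathcal{H}^{n-1}(\partial W \triangle \partial W_\varepsilon) < \varepsilon$. By Lemma~\ref{T1} the projection $\pi_{W_\varepsilon}: \partial W \to \partial W_\varepsilon$ is surjective and $1$-Lipschitz, giving $\mathcal{H}^{n-1}(\partial W_\varepsilon) \leq \mathcal{H}^{n-1}(\partial W)$; conversely, continuity of the surface area of convex bodies under Hausdorff convergence (classical Minkowski theorem) gives $\mathcal{H}^{n-1}(\partial W_\varepsilon) \to \mathcal{H}^{n-1}(\partial W)$. Since $W_\varepsilon \subseteq W$ one has $\partial W \cap W_\varepsilon = \partial W \cap \partial W_\varepsilon$; set $A_r := \partial W \cap \partial W_\varepsilon$, so that
\[
\partial W \triangle \partial W_\varepsilon = (\partial W \setminus A_r) \cup (\partial W_\varepsilon \setminus A_r).
\]
A point $x \in \partial W$ lies in $A_r$ precisely when there is a closed ball $\overline{B}(z, r) \subseteq W$ with $x \in \partial \overline{B}(z, r)$, i.e., when an inner ball of radius $r$ fits at $x$; the family $\{A_r\}$ is increasing as $r$ decreases. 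By Alexandrov's theorem on a.e.\ twice-differentiability of convex functions, $\mathcal{H}^{n-1}$-a.e.\ $x \in \partial W$ admits a second-order tangent paraboloid with finite principal curvatures, and a standard convex-geometric argument (combining the local second-order bound with convexity) places such points in $A_r$ for all sufficiently small $r$. Monotone convergence therefore gives $\mathcal{H}^{n-1}(A_r) \to \mathcal{H}^{n-1}(\partial W)$, and combined with the surface area continuity,
\[
\mathcal{H}^{n-1}(\partial W \triangle \partial W_\varepsilon) = \mathcal{H}^{n-1}(\partial W) + \mathcal{H}^{n-1}(\partial W_\varepsilon) - 2 \mathcal{H}^{n-1}(A_r) \longrightarrow 0
\]
as $r \to 0$. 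Fixing $r$ small enough to make this quantity less than $\varepsilon$ (and simultaneously maintain $\partial W_\varepsilon \subset V$) completes the argument.
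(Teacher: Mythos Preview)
Your argument is essentially correct and takes a genuinely different route from the paper. The paper approximates the Minkowski functional $\mu$ (or $\mu^2$, in the strongly convex case) in the Lusin sense by a $C^{1,1}_{\loc}$ convex function $g\geq\mu$ via Lemma~\ref{T11}, then uses the coarea formula to select a level $t_0$ at which $\mathcal{H}^{n-1}(\{g=t_0\}\triangle\{\mu=t_0\})$ is small, and sets $W_\varepsilon=\tfrac{1}{t_0}g^{-1}((-\infty,t_0])$. Your morphological opening $W_\varepsilon=(W\ominus r\overline B)+r\overline B$ is more elementary in that it avoids any Lusin-type input: $C^{1,1}$ regularity comes for free from the ball summand, and the measure bound is obtained from Alexandrov's theorem together with continuity of surface area under Hausdorff convergence. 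The paper's method, by contrast, yields quantitative control at each step (via the coarea formula) and transfers directly to the $C^2$ setting of Theorem~\ref{geometric corollary}, which the opening construction cannot do.

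Two places in your write-up deserve tightening. First, the ``standard convex-geometric argument'' that an Alexandrov point $x\in\partial W$ lies in $A_r$ for small $r$ is not quite immediate: the second-order expansion only controls $\partial W$ near $x$, and one must also rule out the inner ball $\overline B(x-r\nu,r)$ exiting $W$ through a distant part of $\partial W$. This is handled by choosing $r$ smaller than both the local chart size and half the width of $W$ in the direction $\nu$, but it should be said. Second, your strong-convexity check at edge points of $\partial L_r$ is incomplete: high principal curvatures on a spherical cap are local, and ``$W_\varepsilon\subseteq W$'' does not by itself furnish an outer $R$-ball through $y$ when $y\in\INT(W)$. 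A clean fix is to note that $L_r=\bigcap_\alpha\overline B(x_\alpha,R-r)$ is itself strongly convex, so (by the proof of Proposition~\ref{characterizations of strong convexity of compact bodies}(a)$\Rightarrow$(b), which works for \emph{every} unit normal) one has $L_r\subseteq\overline B(z-(R-r)\nu,R-r)$ for every $z\in\partial L_r$ and every $\nu\in N_{L_r}(z)\cap\mathbb{S}^{n-1}$; adding $r\overline B$ then gives $W_\varepsilon\subseteq\overline B(y-R\nu,R)$ with $y=z+r\nu$ on its boundary.
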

\begin{proof}
Next, we recall and adapt the proof of \cite[Corollary 1.7]{AH}, or \cite[Theorem 1.4]{ACH}, to our context, showing the bound on the $(n-1)$-dimensional Hausdorff measure of the symmetric difference $\partial W \triangle \partial W_{\varepsilon}$, that $W_\varepsilon$ is a strongly convex body if $W$ is a strongly convex body, and $\partial W_{\varepsilon}\subset V$.

We assume that $0\in \INT(W)$; recall the Minkowski functional of $W$, $\mu: \R^n \to [0, \infty)$ defined by
$$
\mu(x):=\inf\{\lambda> 0  : \frac{x}{\lambda} \in W\},
$$
satisfies $\mu$ is convex and Lipschitz. Let $L$ be the Lipschitz constant of $\mu$. By Lemma \ref{T11},  there exists a convex function $g=g_\varepsilon \in C^{1,1}_{{\textrm{loc}}}(\R^n)$ such that
$$
\left| \{x\in 2 W \, : \, \mu(x)\neq g(x)\}\right|<\frac{\varepsilon}{L}.
$$
Let $C_{1,2},A \subset \R^n$ be
\begin{align*}
&C_{1,2}:=2W\setminus W=\{x\in\R^n : 1< \mu(x)\leq 2\}, \text{ and} \\
&A: =\{x\in C_{1,2} : \mu(x)\neq g(x)\}.
\end{align*}
By the coarea formula for Lipschitz functions (see \cite[Section 3.4.2]{EG}, for instance) we have
$$
\varepsilon> L\, |A|\geq\int_{A}|\nabla \mu(x)|\, dx=\int_{1}^{2}\mathcal{H}^{n-1}\left(A\cap \mu^{-1}(t)\right)\, dt,
$$
implying $|\{s \in (1,2]: \mathcal{H}^{n-1}\left(A\cap \mu^{-1}(s)\right)> \varepsilon \}| < 1$. Because $g \in C^{1,1}_{\loc}(\R^n)$ is convex and does not attain a minimum in $g^{-1}((1,2])$, we have $|\nabla g(x)|>0$ for all $x \in C_{1,2}$. Together, these results imply that there exists a regular value of $g$, $t_{0}\in (1, 2)$, where 
\begin{align}
\mathcal{H}^{n-1}\left(A\cap \mu^{-1}(t_0)\right)< \varepsilon. \label{regval}
\end{align}
Then, we define 
$$
W_{\varepsilon}=\frac{1}{t_0}g^{-1}((-\infty, t_0]).
$$
Because $g$ is convex and $C^{1,1}_{\textrm{loc}}$, and $t_0$ is a regular value of this function, $W_{\varepsilon}$ is a convex body of class $C^{1,1}_{\textrm{loc}}$ with boundary
$$
\partial W_{\varepsilon}=\frac{1}{t_0}g^{-1}(t_0),
$$
implying
$$
t_0(\partial W\setminus \partial W_\varepsilon)=A\cap \mu^{-1}(t_0).
$$
With inequality \eqref{regval}, this yields
$$
\mathcal{H}^{n-1}(\partial W\setminus\partial W_\varepsilon)\leq 
t_{0}^{n-1} \mathcal{H}^{n-1}\left(\partial W\setminus \partial W_{\varepsilon}\right)=
\mathcal{H}^{n-1}\left(A\cap \mu^{-1}(t_0)\right)<\varepsilon.
$$
Since $g\geq \mu$, we have $W_{\eps}\subset W$. In particular, $W_{\eps}$ is compact and, therefore, of class $C^{1,1}$. Because the metric projection $\pi:\R^n\to W_{\eps}$, is $1$-Lipschitz and maps $\partial W$ onto $\partial W_{\eps}$, we also have
$$
\mathcal{H}^{n-1}\left(\partial W_{\eps}\setminus\partial W\right)=
\mathcal{H}^{n-1}\left(\pi\left(\partial W\setminus\partial W_{\eps}\right)\right)\leq
\mathcal{H}^{n-1}\left(\partial W\setminus\partial W_\varepsilon\right)<\varepsilon.
$$
Therefore $\mathcal{H}^{n-1}\left(\partial W \triangle \partial W_\varepsilon\right)<2\varepsilon$.

If we further assume that $W$ is a strongly convex body, then by Proposition~\ref{characterizations of strong convexity of compact bodies}, $\mu^2$ is a strongly convex function, and applying Lemma \ref{T11}, we obtain a strongly convex function $g\in C^{1,1}_{\loc}(\R^n)$ such that $\mu^2\leq g$, and
$$
\left| \{x\in 2 W \, : \, \mu^2(x)\neq g(x)\}\right|<\frac{\varepsilon}{L},
$$
where $L=\Lip(\mu)$. Thus,
$$
\left| \{x\in 2 W \, : \, \mu(x)\neq h(x)\}\right|<\frac{\varepsilon}{L},
$$
where $h: \R^n \to \R$ is defined by $h(x):=|g(x)|^{1/2}$. Because $|\nabla g(x)|>0$ for $x \in C_{1,2}$, we have $h \in C^{1,1}(C_{1,2})$. There exists a regular value of $h$, $t_0 \in (1,2)$ satisfying 
an analog of \eqref{regval}. Let $W_\varepsilon \subset \R^n$ be
$$
W_{\varepsilon}:=\frac{1}{t_0}h^{-1}((-\infty, t_0]);
$$
then, $\frac{1}{t_0}g^{-1}(t_{0}^2)=\frac{1}{t_0}h^{-1}(t_0)=\partial W_{\varepsilon}$. Because $h$ is coercive, by Proposition~\ref{characterizations of strong convexity of compact bodies} (e) $\Rightarrow$ (a), we deduce $W_\varepsilon$ is a strongly convex body. Further, the inequality $\mu^2\leq g$ implies that $W_{\varepsilon}\subset W$. The proof that $\mathcal{H}^{n-1}\left(\partial W \triangle \partial W_\varepsilon\right)<2\varepsilon$ is completed exactly as above.

Finally, given an open set $V\supset \partial W$, we want to show $\partial W_{\varepsilon}\subset V$ if $\varepsilon$ is small enough. Suppose not; then there exists a sequence of $C^{1,1}$ (strongly) convex bodies $(U_k)_{k\in\N}$ such that 
$$
\mathcal{H}^{n-1}\left(\partial W \triangle \partial U_{k}\right)< 1/k \text{ and } U_k\subseteq W \text{ for all } k \in \N.
$$ 
Because $W$ is compact, $V\supset \{x \in \R^n: \dist(x, \partial W) \leq 2r\}$ for some $r>0$. Thus, there is sequence $(z_k)_{k \in \N}$ with $z_k\in\partial U_k$ for each $k \in \N$ such that
$$
\dist(z_k, \partial W)\geq 2r>0
$$ 
Since $(z_k)_{k \in \N} \subset W$, up to taking a subsequence, we may assume that $(z_k)_{k \in \N}$ converges to some $z_0\in W$, and, necessarily, $\dist(z_0, \partial W)\geq 2r>0$. 
Hence, there exists $k_0\in \N$ such that for $k\geq k_0$, we have $B(z_k, r)\subset B(z_0, 2r)\subset W$. Let $H_k$ denote the tangent hyperplane to $\partial U_k$ at $z_k$, and $H_k^{-}$ and $H_k^{+}$ denote the open halfspaces with common boundary $H_k$. Suppose $U_k\subset \overline{H_{k}^{-}}$; observe that the metric projection $\pi:\partial W\to \partial B(z_k, r)$ is $1$-Lipschitz and maps $\partial W\cap H_k^{+}$ onto $\partial B(z_k, r)\cap H_k^{+}$. We deduce
\begin{align*}
\frac{1}{2}\mathcal{H}^{n-1}(\partial B(0,r))&= \mathcal{H}^{n-1}\left(	\partial B(z_k, r) \cap H_k^{+}\right)\\
&\leq 
\mathcal{H}^{n-1}\left(	\partial W\cap H_k^{+}\right)\leq 
\mathcal{H}^{n-1}\left(	\partial W \triangle \partial U_k\right)\leq 1/k 
\end{align*}
for all $k\geq k_0$, which is absurd. Thus for $\varepsilon$ small enough, we must have $\partial W_{\varepsilon}\subset V$.
\end{proof}

\begin{proof}[Proof of Theorem \ref{geometric corollary}]
Let $W \subset \R^n$ be a locally strongly convex body, $\varepsilon>0$,  and the set $V \supset \partial W$ be open. We want to show there exists a $C^2$ locally strongly convex body $W_{\eps, V}$ such that $\mathcal{H}^{n-1}(\partial W_{\eps, V}\triangle\,\partial W)<\varepsilon$ and $\partial W_{\eps, V}\subset V$. Moreover, if $W$ is a strongly convex body, then $W_{\eps, V}$ can be chosen to be a strongly convex body as well. We consider two cases:

\noindent {\bf Case 1. Suppose that $W$ is \underline{not} bounded.} Because $W$ is locally strongly convex, $\partial W$ can be regarded, up to a suitable rotation, as the graph of a convex function $f:U\subseteq\R^{n-1}\to\R$ such that $\lim_{y\in U, |y|\to\infty}f(y)=\infty$ (if $U$ is not bounded) and $\lim_{y\to x}f(y)=\infty$ for every $x\in\partial U$ (if $U\neq\R^{n-1}$); see \cite{AS} for instance.\footnote{We warn the reader that what in this paper we call a locally strongly convex function is called a strongly convex function in \cite{AS}.} According to Lemma \ref{equivalent defn of locally strongly convex body} the function $f$ is locally strongly convex. Hence the result is a straightforward consequence of Theorem~\ref{ADHthm}. (Notice (b) of Theorem~\ref{ADHthm} can be used to ensure $\partial W_{\eps,V} \subset V$.)

\noindent {\bf Case 2. Suppose that $W$ is bounded.} Then $W$ is compact and thus a strongly convex body. By Lemma \ref{corollary for convex bodies}, there exists a strongly convex body $W_{\eps/2} \subseteq W$ of class $C^{1,1}$ such that $\mathcal{H}^{n-1}(\partial W \triangle \partial W_{\eps/2})<\eps/2$. We will prove there exists a $C^2$ strongly convex body $W_{\eps/2,V}$, satisfying $\mathcal{H}^{n-1}(\partial W_{\eps/2} \triangle \partial W_{\eps/2, V})<\eps/2$. Then because 
\begin{align*}
    \partial W  \triangle \partial W_{\eps/2, V} \subset (\partial W  \triangle \partial W_{\eps/2} ) \cup (\partial W_{\eps/2}  \triangle \partial W_{\eps/2, V} ),
\end{align*}
we deduce
\begin{align*}
    \mathcal{H}^{n-1}\left(  \partial W  \triangle \partial W_{\eps/2, V}\right)\leq \mathcal{H}^{n-1}\left( \partial W  \triangle \partial W_{\eps/2}\right)+ \mathcal{H}^{n-1}\left(\partial W_{\eps/2}  \triangle \partial W_{\eps/2, V} \right) < \eps
\end{align*}
Hence, from now on, we assume $W$ is a $C^{1,1}$ strongly convex body.

By Lemma \ref{metric projection is 2 Lip on a neighborhood of a C11 boundary} we know that there exists an open neighborhood $\Omega$ of $\partial W$ such that the metric projection $\pi: \Omega\to\partial W$ is well defined and $2$-Lipschitz.  Without loss of generality we may assume that $V\subset \Omega$ and $0\in \textrm{int}(W)$. Let $\mu: \R^n \to [0,\infty)$ be the Minkowski functional of $W$; recall 
$$
\mu(x)=\inf\{\lambda\geq 0 \, : \, \frac{1}{\lambda} x\in W\}.
$$
The function $\mu$ is convex and Lipschitz on $\R^n$, and of class $C^{1,1}$ on $\R^n\setminus B(0,r)$ for every $r>0$.  
Let $L$ be the Lipschitz constant of $\mu$, and let $R>0$ be large enough so that 
$$ 
2 W\subseteq B(0, R).
$$ 
We may assume our given $\varepsilon$ is in $(0, 1/4)$ and small enough so that
$$
\mu^{-1}\left([1-5\varepsilon, 1+5\varepsilon]\right)\subset V\subset\Omega.
$$
Applying Lemma \ref{characterizations of strong convexity of compact bodies} (a) $\Rightarrow$ (d) to $W$, we deduce $\mu^2$ is strongly convex on $\R^n$. By Theorem~\ref{ADHthm} there exists a strongly convex function $g\in C^{2}(\R^n)$ such that
\begin{align}
\left|\{x\in B(0, R) \, : \, \mu(x)^2\neq g(x)\}\right|<\frac{\varepsilon^2}{(8 L^2R+4\varepsilon/R)2^n} \label{lusbd}
\end{align}
and for all $x \in \R^n$,
\begin{align}
|\mu^2(x) -g(x)|<\eps. \label{ptbd}
\end{align}
Because $\mu$ is $L$-Lipschitz, we have
$$
-\varepsilon\leq g(x)\leq\mu(x)^2+\varepsilon \leq 4(LR)^2+\varepsilon \quad (x\in B(0, 2R)).
$$
Applying \cite[Lemma 3.3]{ADH}, we deduce 
$$
\textrm{Lip}\left(g_{|_{B(0,R)}}\right)\leq \frac{4(LR)^2+2\varepsilon}{R}.
$$
Let $h:\R^n \to \R$ be defined as $h(x):=|g(x)|^{1/2}$; then for $ x\in h^{-1}\left([1, 1+\varepsilon]\right) \subset g^{-1}\left([1, 1+\varepsilon]\right)$,
\begin{align*}
    |\nabla h(x)| &= \frac{|\nabla g(x)|}{2|g(x)|^{1/2}} \\
    &\leq 2L^2R+\varepsilon/R.
\end{align*}
Further from (\ref{ptbd}), for $x \in h^{-1}([1,1+ \eps])$, we have 
\begin{align*}
    &h^2(x) - \eps \leq \mu^2(x) \leq h^2(x) + \eps, \text{ implying} \\
    &1 - \eps \leq \mu^2(x) \leq 1 + 4 \eps, \text{ and thus,}\\
    &1 - \eps \leq \mu(x) \leq 1 + 4 \eps \quad \big(x \in h^{-1}([1, 1+\eps]) \big).
\end{align*}
This shows that 
$$
h^{-1}\left([1, 1+\varepsilon]\right)\subset \mu^{-1}\left([1-5\varepsilon, 1+5\varepsilon]\right)\subset V\subset\Omega.
$$

Now consider the set 
$$
A:=\{x\in h^{-1}([1, 1+\varepsilon
]): \mu(x)^2\neq g(x)\}=\{x\in h^{-1}([1, 1+\varepsilon
]) : \mu(x)\neq h(x)\}.
$$ 
By the coarea formula for Lipschitz functions (see \cite[Theorem 3.10, Section 3.4.2]{EG} for instance) we have
$$
\frac{\varepsilon^2}{2^{n+2}}> \left( 2 L^2R +\varepsilon/R\right) |A| \geq\int_{A}|\nabla h(x)|\, dx=\int_{1}^{1+\varepsilon}\mathcal{H}^{n-1}\left(A\cap h^{-1}(t)\right)\, dt.
$$
This inequality implies that there exists $t_{0}\in (1, 1+\varepsilon)$ such that
$$
\mathcal{H}^{n-1}\left(A\cap h^{-1}(t_0)\right)< \varepsilon/2^{n+2},
$$
and because $g$ is convex and cannot have a minimum in {$g^{-1}((1, 2])$}, the number ${t_0}^2$ is a regular value of $g$.
Then, we define 
$$
W_{\varepsilon}:=\frac{1}{t_0}h^{-1}((-\infty, t_0]).
$$
Since $\partial W_{\eps}=\frac{1}{t_0} h^{-1}(t_0)=\frac{1}{t_0} g^{-1}({t_0}^2)$ is a hypersurface of class $C^2$,  and $h$ is coercive, we apply Proposition \ref{characterizations of strong convexity of compact bodies} to deduce that $W_{\varepsilon}$ is a strongly convex body of class $C^{2}$, and 
$$
t_0(\partial W_{\eps}\setminus \partial W)=A\cap h^{-1}(t_0).
$$
This yields
$$
\mathcal{H}^{n-1}(\partial W_{\eps}\setminus\partial W)\leq 
t_{0}^{n-1} \mathcal{H}^{n-1}\left(\partial W_{\eps}\setminus \partial W\right)=
\mathcal{H}^{n-1}\left(A\cap h^{-1}(t_0)\right)<\varepsilon/2^{n+2}.
$$
Further, 
$$
\partial W_{\varepsilon}\subset \mu^{-1}\left([1-\varepsilon, 1+\varepsilon]\right))\subset V\subset\Omega,
$$
and, consequently, the metric projection $\pi: \partial W_{\eps}\to \partial W$ is well-defined and $2$-Lipschitz. Hence,
$$
\mathcal{H}^{n-1}(\partial W\setminus\partial W_{\eps})\leq 
2^{n-1}\mathcal{H}^{n-1}\left(\partial W_{\eps}\setminus \partial W\right)<\varepsilon/4.
$$
Therefore, we conclude $\mathcal{H}^{n-1}(\partial W \triangle\partial W_{\eps})<\varepsilon$.
\end{proof}


\end{document}